\documentclass[12pt]{article}
\usepackage{latexsym, amsmath, amsfonts, amsthm, amssymb}
\usepackage{times}
\usepackage{a4wide}
\usepackage{graphicx, tocloft}
\usepackage[dvipsnames]{xcolor}

\def \RR {\mathbb R}

\def \EE {\mathbb E}

\def \PP {\mathbb P}

\def \cT {\mathcal T}

\def \cS {\mathcal S}

\def \cK {\mathcal K}

\newtheorem{theorem}{Theorem}[section]

\newtheorem{lemma}[theorem]{Lemma}

\newtheorem{proposition}[theorem]{Proposition}
\newtheorem{corollary}[theorem]{Corollary}

\theoremstyle{definition}

\def\qed{\hfill $\vcenter{\hrule height .3mm
		\hbox {\vrule width .3mm height 2.1mm \kern 2mm \vrule width .3mm
			height 2.1mm} \hrule height .3mm}$ \bigskip}

\begin{document}

\title{Linear extension operators for Sobolev spaces on radially-symmetric binary trees}

\author{}
\date{}
\maketitle
\vspace{-50pt}

\begin{center}
{\large Charles Fefferman\textsuperscript{1} and Bo'az  Klartag\textsuperscript{2}}

\bigskip\bigskip
{\it Dedicated in friendship to David Jerison}
\end{center}

\footnotetext[1]{Department of Mathematics, Princeton University, Fine Hall, Washington Road, Princeton, New Jersey 08544, USA.
Email: cf@math.princeton.edu. Supported
by the Air Force Office of Scientific Research, grant number FA9950-18-1-0069
and the National Science Foundation (NSF), grant number DMS-1700180.}
\footnotetext[2]{Department of Mathematics, Weizmann Institute of Science, Rehovot 7610001, Israel.
Email: boaz.klartag@weizmann.ac.il.  Supported by a grant from the Israel Science Foundation (ISF). \\ $ $ \\
Keywords: linear extension operator, binary tree. \\
AMS classification (MSC 2020): 46E36
}

\begin{abstract}
{
Let $1 < p < \infty$ and suppose that we are given a function $f$ defined on the leaves of a weighted tree.  We would like to extend
$f$ to a function $F$ defined on the entire tree, so as to minimize the weighted $W^{1,p}$-Sobolev norm of the extension.
An easy situation is when $p = 2$, where the harmonic extension operator provides such a function $F$.
In this note we record our analysis of the particular case of a radially-symmetric binary tree, which is a complete, finite, binary tree with weights that depend only on the distance from the root.
Neither the averaging operator nor the harmonic extension operator work here in general. Nevertheless, we prove
the existence of a linear extension operator whose norm is bounded by a constant depending solely on $p$. This operator is a variant of the standard harmonic extension operator,
and in fact it is harmonic extension with respect to a certain Markov kernel determined by $p$ and by the weights.}
\end{abstract}

\section{Introduction}
\label{sec1}

Consider a full binary tree of height $N$, whose set of vertices is denoted by
$$ V = \bigcup_{k=0}^N \{0,1 \}^k, $$
i.e., the vertices are strings of zeroes and ones of length at most $N$.
For $x \in \{0, 1 \}^k$ and $\ell \leq k$ we write $\pi_\ell(x) \in \{0,1 \}^{\ell}$
for the prefix of $x$ of length $\ell$.
Thus for $k \geq 1$, the parent of a vertex $x \in \{0,1 \}^k \subseteq V$ is the vertex
$\pi_{k-1}(x)$. The set $\{ 0, 1 \}^0$ is a singleton whose unique element is denoted by $\emptyset$, the empty string, which is the root of the tree. The set of leaves of the tree is $\{0,1 \}^N$, and all other vertices in $V$ are internal vertices. A vertex $x \in \{ 0, 1 \}^k$ is said to have depth
$$ d(x) = k, $$
thus leaves have depth $N$ and the root has depth $0$. The collections of bijections from $V$ to $V$ that preserve depth and parenthood relations  form a group. This group is referred to as the symmetry group of the tree. It has $2^{2^N-1}$ elements, and it is a 2-Sylow subgroup of the group of all permutations of the leaves.
The set  $$ E = \bigcup_{k=1}^N \{0,1 \}^k = V \setminus \{ \emptyset \} $$
is referred to as the set of {\it edges} of the tree, and the depth of an edge $e \in \{0, 1 \}^k$ is $d(e) = k$.
That is, we think of $e \in E$ as an undirected edge connecting the vertex whose string corresponds to $e$ to its unique parent.
Each internal vertex other than the root is connected to three vertices, which are its parent and its two children.
Assume that we are given edge weights
$$ W_1,W_1,\ldots,W_{N} > 0, $$
where we view $W_k$ as the weight of all edges of depth $k$.
For $1 < p < \infty$, the associated $\dot{W}^{1,p}$-seminorm is defined, for $F: V \rightarrow \RR$, via
\begin{equation} \| F \|_{\dot{W}^{1,p}(V)} =  \left( \sum_{k=1}^{N} W_k \sum_{x \in \{0,1 \}^{k}} |F(x) - F(\pi_{k-1}(x))|^p \right)^{1/p}. \label{eq_1802} \end{equation}
We write $\partial V = \{ 0, 1 \}^N \subseteq V$ for the set of leaves of the tree. The trace of the $
\| \cdot \|_{\dot{W}^{1,p}} $-seminorm is defined, for $f: \partial V \rightarrow \RR$, via
\begin{equation}  \| f \|_{\dot{W}^{1,p}(\partial V)} = \inf \left \{ \| F \|_{\dot{W}^{1,p}(V)} \, ; \, F|_{\partial V} = f \right \},
\label{eq_1210} \end{equation}
i.e., the infimum of the $\dot{W}^{1,p}$-seminorm over all extensions of $f$ from the leaves to the entire tree.
We write $\RR^V$ for the collection of all functions $f: V \rightarrow \RR$, and similarly $\RR^{\partial V}$ is
the collection of all functions $f: \partial V \rightarrow \RR$. Our main result is the following:

\begin{theorem} Let $1 < p < \infty$ and let $W_1,\ldots,W_N > 0$. Then there exists a linear operator $H: \RR^{\partial V} \rightarrow \RR^V$ with the following properties:
\begin{enumerate}
  \item It is a linear extension operator, i.e., $(H f)(x) = f(x)$ for any $x \in \partial V$ and any function $f : \partial V \rightarrow \RR$.
  \item Its norm is bounded by a constant $\bar{C}_p$ depending only on $p$, i.e., for any $f: \partial V \rightarrow \RR$,
  $$ \| H f \|_{\dot{W}^{1,p}(V)} \leq \bar{C}_p
  \| f \|_{\dot{W}^{1,p}(\partial V)}. $$
\end{enumerate}
In fact, we have the bound
\begin{equation}  \bar{C}_p \leq
	4 p^{1/p} q^{1/q} \cdot \left( 1 + \max \{ (p-1)^{-1/p}, (q-1)^{-1/q}  \} \right) \leq C \cdot \max \left \{ \frac{1}{p-1}, \frac{1}{q-1} \right \}, \label{eq_1003} \end{equation}
where $q = p / (p-1)$ and where $C > 0$ is a universal constant.
\label{thm_1226_}
\end{theorem}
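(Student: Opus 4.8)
The plan is to build a computable model of the trace seminorm, choose the Markov kernel so that the associated harmonic extension is exactly $\dot W^{1,p}$-optimal on a basis of ``Haar atoms'', and then control arbitrary $f$ by a martingale square-function estimate.

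\textbf{A model for the trace seminorm.} For an internal vertex $v$ let $L(v)\subseteq\partial V$ denote the leaves below $v$, put $f_v=|L(v)|^{-1}\sum_{y\in L(v)}f(y)$, and set $D_v=f_{v0}-f_{v1}$. I would first prove
\[
\|f\|_{\dot W^{1,p}(\partial V)}^p\ \asymp_p\ \sum_{j=0}^{N-1}\Theta_j\!\!\sum_{v:\,d(v)=j}\!\!|D_v|^p,\qquad \Theta_j:=\Bigl(\,\sum_{k=j+1}^{N}(W_k\,2^{\,k-j-1})^{1-q}\Bigr)^{1-p},
\]
noting that $\Theta_j\le W_{j+1}$, with $\Theta_j\ll W_{j+1}$ when the deeper weights are small. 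The upper bound comes from extending each atom $g_v:=\mathbf 1_{L(v0)}-\mathbf 1_{L(v1)}$ by a function supported on the subtree $T_v$ below $v$ which is constant on each sub-subtree apart from a one-dimensional ``birth--death'' profile along depths; a series-resistance (Lagrange multiplier) computation gives that its energy equals $2\Theta_{d(v)}$, and then $f=f_\emptyset+\sum_v\tfrac12D_vg_v$, combined with disjointness of supports at a fixed depth, yields $\le$. The lower bound $\|f\|_{\dot W^{1,p}(\partial V)}^p\gtrsim_p\sum_j\Theta_j\sum_{d(v)=j}|D_v|^p$ is the harder half: for an arbitrary extension $F$ one expresses each $D_v$ as a telescoping sum along the downward paths from $v$ and applies H\"older's inequality against weights adapted to $(W_k)$, dual to the extremal extension above. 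It is exactly here that the averaging operator and the $p=2$ harmonic extension fail: for them the relevant weight is the cruder $W_{j+1}$, which can be arbitrarily larger than $\Theta_j$.

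\textbf{The Markov kernel.} I would take $H$ to be harmonic extension (absorbed on $\partial V$) for the reversible Markov chain on $V$ whose depth-$k$ edge has conductance $W_k^{\,q-1}2^{(q-2)k}$; equivalently, at an internal vertex $v$ of depth $1\le j\le N-1$,
\[
(Hf)(v)=\lambda_j\,(Hf)(\pi_{j-1}v)+\tfrac{1-\lambda_j}{2}\bigl((Hf)(v0)+(Hf)(v1)\bigr),\qquad \lambda_j=\bigl(1+(2W_{j+1}/W_j)^{\,q-1}\bigr)^{-1},
\]
and $(Hf)(\emptyset)=\tfrac12\bigl((Hf)(0)+(Hf)(1)\bigr)$; for $p=2$ this is the ordinary harmonic extension. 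The choice is forced by the previous step: a direct computation identifies $Hg_v$ with the extremal extension of $g_v$ --- since for $p>1$ the $\dot W^{1,p}$-energy is strictly convex, the minimising extension is unique, so $H$ reproduces it --- so that $\|Hg_v\|_{\dot W^{1,p}(V)}^p=2\Theta_{d(v)}=\|g_v\|_{\dot W^{1,p}(\partial V)}^p$. Harmonicity on a tree also makes the increments explicit: with $\rho_k:=(W_k2^k)^{1-q}$ (a normalisation of the edge resistances) and $S_j:=\sum_{i=j+1}^N\rho_i$, one has for $x$ of depth $k$
\[
(Hf)(x)-(Hf)(\pi_{k-1}x)\ =\ \frac12\sum_{j=0}^{k-1}\frac{\rho_k}{S_j}\,(-1)^{x_{j+1}}\,D_{\pi_j(x)}.
\]

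\textbf{The square-function estimate.} Fix a depth $k$ and view $M_k(x):=\sum_{j=0}^{k-1}(-1)^{x_{j+1}}D_{\pi_j(x)}/S_j$ as a function on $\{0,1\}^k$ with the uniform measure. Since $D_{\pi_j(x)}/S_j$ depends only on $x_1,\dots,x_j$ whereas $(-1)^{x_{j+1}}$ depends only on $x_{j+1}$, the partial sums of $M_k$ form a dyadic martingale with $\pm1$ multipliers, so a martingale $L^p$-inequality (von Bahr--Esseen for $1<p\le2$, Burkholder's square-function inequality for $p\ge2$, or equivalently duality and the $p\leftrightarrow q$ symmetry of the whole problem) controls $\EE_x|M_k|^p$ by the relevant combination of the numbers $\EE_x|D_{\pi_j(x)}|^p/S_j^p=2^{-j}S_j^{-p}\sum_{d(v)=j}|D_v|^p$. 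Feeding this into $\|Hf\|_{\dot W^{1,p}(V)}^p=2^{-p}\sum_k\rho_k\,\EE_x|M_k(x)|^p$ (using $W_k\rho_k^p2^k=\rho_k$), exchanging the summations in $j$ and $k$, and evaluating the resulting geometric-type series --- the identities $\sum_{k>j}\rho_k=S_j$, $S_j^{1-p}=2^{j+1}\Theta_j$ and $(1-p)(1-q)=1$ make everything collapse --- yields $\|Hf\|_{\dot W^{1,p}(V)}^p\lesssim_p\sum_j\Theta_j\sum_{d(v)=j}|D_v|^p$, which by the first step is $\lesssim_p\|f\|_{\dot W^{1,p}(\partial V)}^p$. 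Tracking the constants --- the martingale constant, the factors $2^{\pm(p-1)}$, and the geometric series of ratios $2^{-(p-1)}$ and $2^{-(q-1)}$ with sums $\asymp(p-1)^{-1}$ and $\asymp(q-1)^{-1}$ that arise in the H\"older step of the trace lower bound and on the $p\ge2$ side of the square-function bound --- leads to a bound of the shape \eqref{eq_1003}.

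\textbf{Main obstacles.} Two steps carry the weight. One is the lower bound for the trace seminorm with the sharp weights $\Theta_j$: the H\"older inequality there must be run against weights that adapt to the profile of $(W_k)$, because any fixed (say geometric) choice is defeated by weights that vary wildly with depth --- precisely the mechanism underlying the unboundedness of the averaging and $p=2$ harmonic operators --- and making this quantitative is the crux. The other is the square-function estimate itself: the atoms' extensions $Hg_v$ at different depths share many common edges, so a naive triangle inequality over the $N$ scales loses a factor $N^{1-1/p}$; genuine orthogonality of the scales is needed, and the reconstitution of the square function into $\sum_j\Theta_j\sum|D_v|^p$ is where the argument bifurcates into $p<2$ and $p>2$ and where the $p$-dependence of $\bar C_p$ is produced.
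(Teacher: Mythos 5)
Your Markov kernel is in fact the same as the paper's: the conductances $W_k^{q-1}2^{(q-2)k}$ give precisely the escape probabilities in (\ref{eq_338}), and your observation that the resulting harmonic extension reproduces the unique $p$-energy-minimizing extension of each Haar atom $g_v$, with energy $2\Theta_{d(v)}$, is correct and is indeed the real reason behind this choice. The difficulty is that your proof route rests on the claimed equivalence $\|f\|_{\dot W^{1,p}(\partial V)}^p\asymp_p\sum_j\Theta_j\sum_{d(v)=j}|D_v|^p$, and this is false for every $p\neq 2$; worse, whichever half of your final chain $\|Hf\|^p\lesssim_p\sum_j\Theta_j\sum_v|D_v|^p\lesssim_p\|f\|_{\dot W^{1,p}(\partial V)}^p$ you actually need fails. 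Take $W_k=M$ for $k\le N-1$, $W_N=1$, with $M$ huge, and $f(y)=\sum_{i=1}^N(-1)^{y_i}$. Then $|D_v|=2$ for every internal $v$, and $\Theta_j\to 2^{N-j-1}$ as $M\to\infty$, so your Haar quantity is $\asymp 2^{N+p-1}N$. On the other hand, any extension of bounded energy must be nearly constant on the internal vertices once $M$ is large, so $\|f\|_{\dot W^{1,p}(\partial V)}^p\to 2^N\min_c\EE\,|f-c|^p=2^N\EE|f|^p\asymp_p 2^N N^{p/2}$ by Khintchine (expectation over the uniform measure on leaves). For $1<p<2$ this kills the inequality $\sum_j\Theta_j\sum_v|D_v|^p\lesssim_p\|f\|^p_{\dot W^{1,p}(\partial V)}$ --- exactly the ``harder half'' your plan relies on --- and for $p>2$ it kills $\|Hf\|^p\lesssim_p\sum_j\Theta_j\sum_v|D_v|^p$, since $Hf$ is an extension and hence $\|Hf\|_{\dot W^{1,p}(V)}\ge\|f\|_{\dot W^{1,p}(\partial V)}$. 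The same example shows your sketched upper bound for the trace model (summing the optimal atom extensions and invoking disjointness at a fixed depth) cannot work for $p>2$: contributions from different scales overlap on deep edges and add constructively, which is the $N^{p/2}$ versus $N$ discrepancy. So the obstacle you flag in your last paragraph is not a technicality to be absorbed into constants: the trace energy aggregates scales like a martingale square function, not like an $\ell^p$-sum of scales, and no argument can pass through the single diagonal quantity $\sum_j\Theta_j\sum_v|D_v|^p$ (von Bahr--Esseen gives it for $p\le 2$ but then the other half is false; Burkholder for $p\ge 2$ genuinely does not collapse to it).

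For contrast, the paper never attempts a characterization of the trace space. It writes the candidate operator on edge increments as an integral operator $T$ on $L^p(E)$, uses the sign structure and tree-equivariance of its kernel to reduce the norm computation to depth-radial functions (a positive-kernel argument in the spirit of Howard--Schep, Lemma \ref{lem_1744}), and then bounds the resulting one-dimensional kernel, with the choice (\ref{eq_1306}) of the $Q_s$, by Muckenhoupt's weighted Hardy inequality. That reduction is what produces a constant depending on $p$ alone, uniformly in $N$ and in the weights; if you want to salvage your approach, you would need to replace the intermediate quantity by something of square-function type on both sides, which is essentially a different (and to our knowledge open-ended) route.
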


The proof of Theorem \ref{thm_1226} is constructive, and the extension operator $H$ that we construct is in fact a {\it harmonic extension operator} with respect to a certain random walk defined on the tree. At each step the random walk jumps from a vertex to one of its neighbors, where of course the neighbors
of a vertex are its parent and its children.
The Markov kernel corresponding to the random walk is invariant under the symmetries of the tree, thus the probability to move from a vertex to its neighbor depends only on the weights of the vertex and of its neighbor.
The Markov kernel of our random walk is determined by the following requirement: For any $s=1,\ldots,N$,
the probability that a random walk starting at some vertex of depth $s$ will reach a leaf before reaching a vertex of depth $s-1$ equals
\begin{equation}  q_{s} = \frac{(2^s W_{s})^{-1/(p-1)}}{\sum_{k=s}^{N} (2^{k} W_{k})^{-1/(p-1)}}.
\label{eq_338} \end{equation}
Thus the weights of our random walk typically depend on $p \in (1, \infty)$, except for the case where $W_s$ is proportional to $2^{-s}$. This seems inevitable. Indeed, in some examples such as the $3^{rd}$ example in Section 2 below, the linear
extension operator $H: \RR^{\partial V} \rightarrow \RR^V$ that corresponds to the parameter
value $p = p_0$, is not uniformly bounded for any  $p \in (1, \infty) \setminus \{ p_0 \}$. When $p = 2$,
our random walk coincides with the usual random walk corresponding to the given weights on the edges of the binary tree, and thus in this case
 $H$ is the standard harmonic extension operator
(hence $\bar{C}_p = 1$ for $p = 2$).

\medskip There is a certain range of weights where the {\it averaging operator} yields a uniformly
bounded linear extension operator, as proven by Bj\"orn $\times 2$, Gill and Shanmugalingam \cite{BBGS}. The averaging operator
is the extension operator that assigns to each internal vertex $v$ the average of the function
values on the leaves of the subtree whose root is $v$. This averaging operator seems
natural also from the point of view of Whitney's extension theory,
see the work by Shvartsman \cite{Shvartsman} on Sobolev extension in $W^{1,p}(\RR^n)$.
However, there are examples of radially-symmetric  binary trees where the averaging operator does not provide
a uniformly bounded operator, such as the case where $W_k = 2^{-k}$ for all $k$.

\medskip What about trees with weights that are not radially-symmetric? Suppose that the edge weights are arbitrary positive
numbers $(W_e)_{e \in E}$ that are not necessarily determined by the depth of the edge. When $p=2$,
there is still a harmonic extension operator of norm one from $\dot{W}^{1,p}(\partial V)$ to $\dot{W}^{1,p}(V)$.
However, for $p \neq 2$ the situation seems  subtle. We conjecture that
in the general case of non-radially-symmetric  tree weights, there is no linear extension operator whose norm is bounded by a function of $p$ alone.
This conjecture is closely related to questions about well-complemented subspaces of $\ell_p^n$ that are beyond the scope of
this note.

\medskip In order to prove Theorem \ref{thm_1226_} we reformulate
the problem in a way that brings us closer to analysis in $\ell_p$-spaces.
For $1 < p < \infty$, the associated $L^p(E)$-norm is
defined, for $f: E \rightarrow \RR$, via
\begin{equation}  \| f \|_p = \| f \|_{L^p(E)} = \left( \sum_{k=1}^{N} W_k \sum_{x \in \{0,1 \}^{k}} |f(x)|^p \right)^{1/p}.
\label{eq_346} \end{equation}
We think of  a function $f: E \rightarrow \RR$
as the gradient of a function $\tilde{f}: V \rightarrow \RR$, uniquely determined up to an additive constant.
Given $f: E \rightarrow \RR$ we thus define a function $\tilde{f}: V \rightarrow \RR$ as follows:
For any $x \in V$ other than the root,
\begin{equation}  \tilde{f}(x) = \sum_{i=1}^{d(x)} f(\pi_i x), \label{eq_1756_}
\end{equation}
while $\tilde{f}(x) = 0$ if $x = \emptyset$ is the root. The only property of $\tilde{f}$ that matters  is that for all $x \in E$,
$$ f(x) = \tilde{f}(x) - \tilde{f}(\pi_{d(x)-1}(x)). $$
We are interested in finding a linear operator $T: L^p(E) \rightarrow L^p(E)$, with a uniform bound on its operator norm, that has the following
properties:
\begin{enumerate}
\item The operator $T$ takes the form
\begin{equation}  T f(x)  = \tilde{T} \tilde{f}(x) - \tilde{T} \tilde{f}(\pi_{d(x)-1} x)
\label{eq_552} \end{equation}
for some linear operator $\tilde{T}: \RR^V \rightarrow \RR^V$. That is, $\tilde{T}$ takes functions on $V$ to functions on $V$,
and $T$ is induced from $\tilde{T}$ via  formula (\ref{eq_552}).
\item The operator $\tilde{T}$ is equivariant
with respect to the tree symmetries and it satisfies $\tilde{T}(1) \equiv 1$, i.e., it maps the constant function $1$ to itself.
\item The function $\tilde{T} g$ coincides with the function $g$ on the leaves of the tree, i.e. $(\tilde{T} g)|_{\partial V} = g|_{\partial V}$ for any $g: V \rightarrow \RR$.
\item The function $\tilde{T} g$ is determined by the values of the function $g$ on the leaves of the tree.
\end{enumerate}

The operator norm of $\tilde{T}$ with respect to the $\dot{W}^{1,p}$-seminorm
equals to the operator norm of $T$ with respect to the $L^p(E)$-norm.
Defining $H (f|_{\partial V})= \tilde{T} f$, Theorem \ref{thm_1226_} may thus be reformulated as follows:

\begin{theorem} Let $1 < p < \infty$ and let $W_1,\ldots,W_N > 0$. Then there exists a linear operator $T: L^p(E) \rightarrow L^p(E)$ with the above properties, whose operator norm is at most
	a certain constant $\bar{C}_p$ depending only on $p$.
In fact, we have the bound (\ref{eq_1003}) for the constant $\bar{C}_p$.
\label{thm_1226}
\end{theorem}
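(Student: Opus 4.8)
The plan is to construct $\tilde{T}$ explicitly as harmonic extension for a carefully chosen Markov kernel on the tree, and then to bound the $L^p(E)$-norm of the induced operator $T$ by an interpolation-flavored argument that plays off the $\ell_p$-geometry against its dual $\ell_q$. Concretely, I would first fix the random walk whose transition probabilities are tree-symmetric and are pinned down by the requirement \eqref{eq_338}: starting from a vertex of depth $s$, the walk hits $\partial V$ before returning to depth $s-1$ with probability $q_s$. This determines the up/down probabilities at every vertex, and one defines $\tilde{T} g$ at an internal vertex $v$ as the expected value of $g$ at the first leaf reached by the walk started at $v$; this automatically satisfies properties (2)--(4) above (equivariance, $\tilde{T}1\equiv 1$, agreement on $\partial V$, dependence only on boundary values), and property (1) holds by construction since we then set $Tf = \widetilde{\tilde{T}\tilde{f}}$ via \eqref{eq_552}.

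The heart of the matter is the norm bound. I would adopt the standard strategy for extension operators: rather than estimating $\|Tf\|_p$ against $\|f\|_p$ directly, use duality. One wants $\|Tf\|_p \le \bar C_p \|f\|_p$, equivalently $\langle Tf, h\rangle \le \bar C_p \|f\|_p \|h\|_q$ for all $h \in L^q(E)$, where the pairing is the one making $L^p(E)^* = L^q(E)$ with the weights $W_k$ appearing as in \eqref{eq_346}. The key feature of harmonic extension is a telescoping / martingale identity: because $\tilde{T}\tilde f(v)$ is an average over the walk, the edge-increments $Tf(x) = \tilde{T}\tilde f(x) - \tilde{T}\tilde f(\pi_{d(x)-1}x)$ can be rewritten as sums of the original increments $f(e)$ along paths, weighted by hitting probabilities. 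I expect that after unwinding this, $\langle Tf,h\rangle$ becomes a double sum over pairs of edges $(e,e')$ with $e'$ below $e$ (or on the path from $e$ to a leaf), of $f(e)\,h(e')$ times an explicit kernel built from the $q_s$'s and the $W_k$'s. The choice \eqref{eq_338} is precisely the one that makes this kernel factor nicely — I anticipate it diagonalizes or telescopes so that the contribution of each depth $s$ is controlled by a single scalar, after which Hölder in the two variables and summation of a geometric-type series in $s$ produces the constant. This is where the somewhat delicate numerology of \eqref{eq_1003} — the $p^{1/p}q^{1/q}$ factor and the $(p-1)^{-1/p}$, $(q-1)^{-1/q}$ terms — should emerge, presumably from applying the sharp form of Hölder's inequality (or a weighted Hardy-type inequality on the chain $0,1,\dots,N$) together with the elementary estimate $\|\sum a_k\|\le\sum\|a_k\|$ combined with $\sum r^k \le (1-r)^{-1}$ and the bound $(1-2^{-1/(p-1)})^{-1}\lesssim 1/(p-1)$ near $p=1$.

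Let me restate the technical skeleton. \emph{Step 1:} Write $\tilde T\tilde f(v) = \sum_{w}\, G(v,w)\, f(w)$ where $G(v,w)$ is the expected number of traversals of edge $w$ by the walk started at $v$ and stopped at $\partial V$, signed according to direction; equivalently $G(v,w)$ is a difference of hitting probabilities. \emph{Step 2:} Deduce a clean formula for $Tf(x)$ as a combination of the values $f(w)$ for $w$ ranging over edges comparable to $x$, with coefficients that are products of the elementary probabilities $q_s$ and their complements; verify $Tf=f$ on $\partial V$ and that everything is tree-equivariant. \emph{Step 3:} For a test function $h\in L^q(E)$, expand $\sum_{k}W_k\sum_{x\in\{0,1\}^k}Tf(x)h(x)$, interchange sums, and organize by the "branching depth" $s$ at which a pair of edges diverges or by the common-ancestor structure; bound each depth-$s$ block using Hölder, where the $2^s$ factor from the number of vertices of depth $s$ is exactly absorbed by the $(2^sW_s)^{-1/(p-1)}$ appearing in \eqref{eq_338}. \emph{Step 4:} Sum the resulting one-dimensional estimate over $s$; the normalization in the denominator of \eqref{eq_338} converts this sum into something telescoping, and what remains is a constant of the advertised form.

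The main obstacle, as I see it, is Step 3: identifying the precise algebraic identity satisfied by the hitting-probability kernel that makes the double sum collapse, and then extracting the \emph{sharp} constants rather than merely a finite bound. Getting a finite $\bar C_p$ for fixed $p$ should follow from general nonsense (any tree-symmetric harmonic extension with positive conductances is bounded), but the whole point of \eqref{eq_1003} is the explicit dependence — in particular that $\bar C_p$ stays bounded as $p\to 2$ and blows up only like $1/(p-1)$ or $1/(q-1)$ at the endpoints. That forces one to be careful with the constant in Hölder (hence the $p^{1/p}q^{1/q}$) and to recognize that the geometric series $\sum_s(2^{-1/(p-1)})^s$ contributes the $(p-1)^{-1}$-type blow-up and nothing worse. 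If the clean collapse in Step 3 fails, the fallback is to prove boundedness on $\ell_2$ (where harmonic extension is automatically a norm-one projection, giving $\bar C_2 = 1$) and on $\ell_1$/$\ell_\infty$ directly (where the averaging-type structure may be estimated crudely), and then interpolate — but interpolation would not obviously yield the sharp $1/(p-1)$ rate, so I would push hard on the direct computation first.
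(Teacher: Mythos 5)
Your construction of the operator is the paper's (the Markov kernel pinned down by \eqref{eq_338}, harmonic extension, then \eqref{eq_552}), but the heart of the argument — your Step 3 — is a hoped-for collapse that does not materialize in the form you describe, and the missing ideas are precisely what the paper's proof consists of. First, the kernel $K(x,y)$ of $T$ has mixed signs, and the paper's route is to split it into the non-ancestral part $K_0\le 0$ and the ancestral part $K_1\ge 0$; for an operator with a kernel of constant sign that is equivariant under the tree symmetries, a Howard--Schep-type argument (the eigenvalue equation $\mathcal{S}^*(\mathcal{S}F)^{p-1}=\lambda F^{p-1}$ plus pointwise H\"older against the extremal invariant function) shows the $L^p$ norm is attained at a radial function. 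This symmetrization is the step that converts the double sum over pairs of edges, which you propose to attack directly by duality and H\"older, into a genuinely one-dimensional weighted problem on $\{1,\dots,N\}$ with weights $2^kW_k$; without it, nothing in your expansion of $\langle Tf,h\rangle$ obviously telescopes, and I do not see how H\"older ``in the two variables'' alone controls the off-diagonal combinatorics uniformly in $N$.

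Second, your quantitative mechanism is off: you expect the constant to come from summing a geometric series like $\sum_s(2^{-1/(p-1)})^s$, but the weights $W_k$ are arbitrary, so no such geometric decay is available in general; the $2^s$ factors are indeed absorbed into the one-dimensional weights $w_s=2^{N+1-s}W_{N+1-s}$, but the uniform bound then comes from the Muckenhoupt criterion for discrete Hardy inequalities, with the choice \eqref{eq_338} (equivalently $Q_s=w_s^{-1/(p-1)}/\sum_{k\le s}w_k^{-1/(p-1)}$) made exactly so that the Muckenhoupt condition holds with a constant depending only on $p$; the factors $p^{1/p}q^{1/q}$ and $(p-1)^{-1/p},(q-1)^{-1/q}$ in \eqref{eq_1003} are the Muckenhoupt constant and the constant in the elementary summation lemma verifying the condition, not H\"older plus a geometric series. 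Finally, your safety nets fail: ``any tree-symmetric harmonic extension with positive conductances is bounded'' is false in the relevant uniform sense (the standard harmonic extension, and the averaging operator, are unbounded uniformly in $N$ for suitable weights, as the paper's examples show), and interpolation from $p=2$ cannot give a bound for the operator at exponent $p$ since the operator itself depends on $p$. So the proposal identifies the right object but is missing the two decisive ingredients: the sign-split plus reduction to radial functions, and the weighted Hardy/Muckenhoupt analysis on the chain.
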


The proof of Theorem \ref{thm_1226} occupies the next three sections. In Section \ref{sec2} we
discuss invariant random walks on a full binary tree and describe the corresponding
harmonic extension operator. In Section \ref{sec3} we deal with the problem of bounding
the norm of this operator, and use the symmetries of the problem in order to reduce it to a one-dimensional question.
This one-dimensional question is then answered in Section \ref{sec4} using the Muckenhoupt criterion \cite{M}.

\medskip When analyzing the binary tree we use the following notation:
We write $dlca(x,y)$ for the length of the maximal prefix shared by the strings $x \in \{0,1\}^k$ and $y \in \{0,1 \}^\ell$,
while $lca(x,y)$ is the maximal prefix itself. Thus for two vertices $x,y \in V$, their
least common ancestor is $lca(x, y) \in V$ and its depth is $dlca(x,y) \in \{ 0,\ldots,N \}$. Note  that for any $x \in E$ and $\omega \in \partial V$,
\begin{equation}
 dlca(\pi_{d(x)-1} x, \omega) = \min \{ d(x) - 1, dlca(x, \omega) \}.
 \label{eq_1719}
\end{equation}

\medskip
\emph{Acknowledgements.} We would like to thank
Jacob Carruth, Arie Israel, Anna Skorobogatova and
Ignacio Uriarte-Tuero for helpful conversations. This research was conducted
while BK was visiting Princeton University's Department of Mathematics; he is grateful for their gracious hospitality.

\section{Invariant random walks}
\label{sec2}

A Markov chain on $V$ is a sequence of random variables $R_1,R_2,\ldots \in V$ such that the distribution of $R_{i+1}$
 conditioned on $R_1,\ldots,R_i$ is the same as its distribution conditioned on $R_{i}$. A Markov chain is time-homogeneous
 if for any $x,y \in V$, the probability that $R_{i+1} = x$ conditioned on the event $R_{i} = y$ does not depend on $i$.
A random walk on $V$ is a time-homogeneous Markov chain $R_1,R_2,\ldots \in V$ such that $R_{i+1}$ is a neighbor of $R_i$ with probability one.

\medskip
We say that the random walk is {\it invariant} if the probability to jump from a vertex $x$ to a vertex $y$ depends only on the depths $d(x)$ and $d(y)$.
Our random walk will be invariant, and it will stop when it reaches a leaf, i.e., we have the stopping time
$$ \tau = \min \{i \geq 1 \, ; \, R_i \ \text{is a leaf}  \}. $$
For $s \geq 1$ we define $q_s$ to be the probability of the following event: Assuming that $R_1$ is a vertex of depth $s$,
the event is that $R_i$ will remain at the subtree whose root is $R_1$ for all $1 \leq i \leq \tau$.
Equivalently,
define $$ X_i = d(R_i) \in \{0,\ldots, N \}. $$
Then $X_1,X_2,\ldots \in \{0, \ldots, N \}$ is a random walk, since $|X_{i+1} - X_i| = 1$ for all $i$. Furthermore,
\begin{equation}  q_s = \PP (\forall 1 \leq i \leq \tau, X_i \geq s \, | \, X_1 = s). \label{eq_X} \end{equation}
Clearly
\begin{equation} q_0 = q_N = 1. \label{eq_322} \end{equation}
For $r,s \in \{0,\ldots,N \}$ with $r \leq s$ we set
$$ p_{s,r} = \PP ( \min \{ X_i \, ; \, i \leq \tau \} = r \, \, | \, \, X_1 = s ). $$
That is, the number $p_{s,r}$ is the probability that $r$ is the minimal node that the walker visits when starting from node $s$, before reaching the terminal node $N$. Clearly $\sum_{r=0}^s p_{s,r} = 1$.

\begin{lemma} For  $0 \leq r \leq s \leq N-1$,
\begin{equation}  p_{s,r} = q_r \cdot \prod_{k=r+1}^s (1 - q_k)
\label{eq_936} \end{equation}
where an empty product equals one. Moreover,  $p_{N,r} = \delta_{rN}$, where $\delta_{r N}$ is the Kronecker delta.
\label{lem_605}
\end{lemma}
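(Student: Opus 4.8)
The plan is to project everything onto the depth process $X_i = d(R_i)$, which is itself a $\pm 1$ random walk on $\{0,\dots,N\}$ because the original walk is invariant, and then to read off the formula from the strong Markov property together with the defining identity (\ref{eq_X}) for $q_s$. The claim $p_{N,r} = \delta_{rN}$ is immediate: if $X_1 = N$ then $R_1$ is already a leaf, so $\tau = 1$ and $\min\{X_i : i \le \tau\} = N$. Hence all the work lies in the range $0 \le r \le s \le N-1$.

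The key steps, in order: \textbf{(i)} Introduce the first-passage times $\sigma_j = \min\{i \ge 1 : X_i = j\}$ (with $\sigma_j = \infty$ if level $j$ is never visited) and record the elementary lattice-path fact that, starting from level $s$, the running minimum $m = \min\{X_i : 1 \le i \le \tau\}$ satisfies $\{m \le j\} = \{\sigma_j < \tau\}$ for every $j \le s$; here one uses that a $\pm1$ walk from $s$ cannot reach level $\le j$ without first hitting $j$, and that $X_{\sigma_j} = j \ne N = X_\tau$ forces $\sigma_j \ne \tau$. Since moreover $\{\sigma_{r-1} < \tau\} \subseteq \{\sigma_r < \tau\}$, this yields $p_{s,r} = \PP(\sigma_r < \tau \mid X_1 = s) - \PP(\sigma_{r-1} < \tau \mid X_1 = s)$, the subtracted term being $0$ when $r = 0$. \textbf{(ii)} Prove by downward induction on $j$ that
\[ \PP(\sigma_j < \tau \mid X_1 = s) = \prod_{k=j+1}^{s}(1-q_k) \qquad (0 \le j \le s), \]
the base case $j = s$ being trivial since $s \le N-1$ gives $\tau \ge 2 > 1 = \sigma_s$, and the inductive step coming from the strong Markov property applied at the stopping time $\sigma_j$: on $\{\sigma_j < \tau\}$ the shifted walk starts afresh at level $j$, its first visit to $N$ is still $\tau$, and the probability that it reaches $j-1$ before $N$ equals $1-q_j$ — indeed the complement of the event defining $q_j$ in (\ref{eq_X}) is exactly $\{\sigma_{j-1} < \tau\}$ under $X_1 = j$ (again using $j-1 \ne N$). \textbf{(iii)} Subtract: for $1 \le r \le s$,
\[ p_{s,r} = \prod_{k=r+1}^{s}(1-q_k) - (1-q_r)\prod_{k=r+1}^{s}(1-q_k) = q_r\prod_{k=r+1}^{s}(1-q_k), \]
and the case $r = 0$ follows in the same way using $q_0 = 1$ from (\ref{eq_322}).

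I do not expect a genuine obstacle here; the proof is a clean application of the strong Markov property. The only points that need a little care are the boundary bookkeeping (the conventions $r = 0$ and $j = s$, and the empty product equalling one), the precise verification that the complement of the event defining $q_j$ coincides with $\{\sigma_{j-1} < \tau\}$, and — if one wishes to be scrupulous — the observation that none of the displayed identities requires $\tau < \infty$ almost surely, since each is an equality between probabilities of events phrased directly in terms of the hitting times $\sigma_j$ and $\tau$.
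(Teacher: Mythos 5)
Your proof is correct and follows essentially the same route as the paper's first argument: the paper decomposes the event into successive descents $s \to s-1 \to \cdots \to r$, each occurring before $\tau$ with probability $1-q_k$, followed by never reaching $r-1$ (probability $q_r$), which is exactly what your strong-Markov induction on the hitting times $\sigma_j$ formalizes, with the minor cosmetic difference that you difference the tail probabilities $\PP(m\le r)-\PP(m\le r-1)$ instead of multiplying in the final factor $q_r$ directly. (The paper also records a second proof via the recurrence $p_{s,r}=q_s\delta_{s,r}+(1-q_s)p_{s-1,r}$, which you did not need.)
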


\begin{proof} The expression on the right-hand side of (\ref{eq_936}) is the probability to ever reach $s-1$ when starting from $X_1 = s$, and from $s-1$ to ever reach $s-2$, etc.
until we finally reach $r$, yet from $r$ we require to {\it never} reach $r-1$. Alternatively, when $0 \leq r \leq s, s \geq 1$ we have the recurrence relation
\begin{equation}   p_{s,r} = q_s \delta_{s,r} + (1 - q_s) \cdot p_{s-1, r}. \label{eq_708} \end{equation}
This recurrence relation leads to  another proof of (\ref{eq_936}).
\end{proof}

Suppose that our random walk  $R_1,R_2,\ldots \in V$ begins at a vertex $R_1 = x$ with $d(x) = s$.
Consider a leaf $y \in \partial V$ with $dlca(x,y) = r$. What is the probability that our random walk will reach the leaf $y$?
We claim that this probability is
\begin{equation}  b_{s,r} := \PP( R_{\tau} = y ) = \sum_{k=0}^r 2^{k-N} p_{s,k}. \label{eq_534}
\end{equation}
Indeed, conditioning on the value of $k = \min_{i \leq \tau} d(R_i) $, by symmetry we know that $R_{\tau}$
is distributed uniformly among the $2^{N-k}$ leaf-descendants of the vertex $\pi_k(x)$. When $k \leq r$, exactly one of these
leaf-descendants is the leaf $y$, since the vertex of minimal depth that $(R_i)$ visits must be the vertex $\pi_k(x)$, which
is a prefix of $y$ as $k \leq r = dlca(x,y)$. Hence the probability that $R_{\tau} = y$, conditioning
on the value of $k$, equals to $1 / 2^{N-k}$ when $k \leq r$ and it vanishes otherwise. By using the definition of $p_{s,k}$ and the complete probability formula,
we obtain (\ref{eq_534}). The {\it harmonic extension operator} associated with our invariant random walk
is given by
\begin{equation}  \tilde{T} g(x) = \sum_{\omega \in \{0, 1 \}^N} b_{d(x), dlca(x,\omega)} \cdot g(\omega) \qquad \qquad \qquad (x \in V). \label{eq_1842} \end{equation}
The operator $T$ is induced from $\tilde{T}$ via formula (\ref{eq_552}) above. Requirements 1,...,4 from Section \ref{sec1}
are clearly satisfied.

\medskip We stipulate that the collection of descendants of a vertex $x \in V$, denoted by $D(x) \subseteq V$, includes the vertex $x$ itself.
 Abbreviate $a \wedge b = \min \{a,b \}$ and $a \vee b = \max \{a, b \}$.
The operator $T$ takes the form
\begin{equation} T f(x) = \sum_{y \in E} K(x,y) f(y),
\label{eq_1354} \end{equation}
where the kernel $K$ is described next.

\begin{proposition} Let $x,y \in E$ and denote $s = d(x), t = d(y), r = dlca(x,y)$. Then the following hold:
If $x \not \in D(y)$ and $y \not \in D(x)$, then
 $r \leq s \wedge t - 1$ and
\begin{equation}  K(x,y) = -q_s  \cdot 2^{-t} \cdot \sum_{k=0}^{ r} 2^{k} p_{s-1, k} \leq 0.
\label{eq_1034}
\end{equation}
Otherwise, i.e., if $y \in D(x)$ or if $x \in D(y)$ then   $r = s \wedge t$ and
\begin{equation}
K(x,y) = q_s \cdot 2^{-t} \cdot  \sum_{k=0}^{r-1} (2^r - 2^{k}) p_{s-1, k} \geq 0.
\label{eq_1037} \end{equation}
 \label{prop_1038}
\end{proposition}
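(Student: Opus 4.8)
The plan is to bypass any direct manipulation of the kernel in (\ref{eq_1842}) and instead recover $K(x,y)$ from the probabilistic meaning of the coefficients $b_{s,r}$. Throughout write $s=d(x),\ t=d(y),\ r=dlca(x,y)$ and let $x'=\pi_{s-1}x$ be the parent of $x$. Unwinding (\ref{eq_1842}) together with the definition (\ref{eq_1756_}) of $\tilde f$ and exchanging the order of summation shows that the coefficient of $f(y)$ in $\tilde T\tilde f(x)$ equals $\sum_{\omega\in D(y)\cap\partial V} b_{d(x),dlca(x,\omega)}$, which by (\ref{eq_534}) is precisely
\[ P(x,y)\ :=\ \PP\bigl(R_\tau\in D(y)\mid R_1=x\bigr), \]
the probability that the invariant walk started at $x$ is absorbed at a leaf descended from $y$. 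Hence, by (\ref{eq_552}) and (\ref{eq_1354}), $K(x,y)=P(x,y)-P(x',y)$, and everything reduces to understanding $P(\cdot,y)$.

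The key step is a first-passage identity. The walk started at $x$ can leave the subtree $D(x)$ only through the edge joining $x$ to $x'$, so with probability $q_s$ it is absorbed at a leaf of $D(x)$ before ever reaching depth $s-1$, and then — by the symmetry argument already used after (\ref{eq_534}) — that leaf is uniform among the $2^{N-s}$ leaves of $D(x)$; with the complementary probability $1-q_s$ the walk reaches $x'$ and, by the strong Markov property, proceeds as a fresh walk from $x'$. Therefore
\[ P(x,y)=q_s\cdot\frac{|D(x)\cap D(y)\cap\partial V|}{2^{N-s}}+(1-q_s)\,P(x',y),\qquad\text{so}\qquad K(x,y)=q_s\!\left(\frac{|D(x)\cap D(y)\cap\partial V|}{2^{N-s}}-P(x',y)\right). \]
The first fraction is elementary: it is $2^{s-t}$ if $x$ is a prefix of $y$, it is $1$ if $y$ is a prefix of $x$, and it is $0$ otherwise, since in the remaining case $D(x)\cap D(y)=\emptyset$. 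For the second term I would compute $P(x',y)$ directly by conditioning on $\kappa=\min_{i\le\tau}d(R_i)$ for the walk from $x'$: the same symmetry gives that the absorbed leaf is then uniform over the $2^{N-\kappa}$ leaves of $D(\pi_\kappa x')$, so
\[ P(x',y)=\sum_{\kappa}p_{s-1,\kappa}\cdot\frac{|D(\pi_\kappa x')\cap D(y)\cap\partial V|}{2^{N-\kappa}}, \]
where the combinatorial factor is $2^{\kappa-t}$ when $\kappa\le dlca(x',y)$, is $1$ when $y$ is a prefix of $x'$ and $\kappa>dlca(x',y)$, and vanishes otherwise. Using (\ref{eq_1719}) to write $dlca(x',y)=\min\{s-1,r\}$ and the normalization $\sum_{\kappa=0}^{s-1}p_{s-1,\kappa}=1$, one checks in the three cases ($x$ a prefix of $y$; $y$ a strict prefix of $x$; neither) that $P(x',y)$ equals, respectively, $2^{-t}\sum_{k=0}^{s-1}2^k p_{s-1,k}$;\ $1-2^{-t}\sum_{k=0}^{t-1}(2^t-2^k)p_{s-1,k}$ (the $\kappa=t$ summand being zero); and $2^{-t}\sum_{k=0}^{r}2^k p_{s-1,k}$.

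Substituting these expressions for the two terms into $K(x,y)=q_s(\,\cdot\,-\,\cdot\,)$ and using $\sum_\kappa p_{s-1,\kappa}=1$ once more yields the stated formulas together with the claimed values of $r$. When neither of $x,y$ is a descendant of the other, the first term vanishes, $r\le s\wedge t-1$, and $K(x,y)=-q_s 2^{-t}\sum_{k=0}^{r}2^k p_{s-1,k}\le 0$, which is (\ref{eq_1034}). When one of $x,y$ is a descendant of the other, $r=s\wedge t$, and the difference rearranges — for $x$ a prefix of $y$ after writing $2^{s-t}=2^{-t}\sum_{k=0}^{s-1}2^s p_{s-1,k}$, and for $y$ a prefix of $x$ directly — into $K(x,y)=q_s 2^{-t}\sum_{k=0}^{r-1}(2^r-2^k)p_{s-1,k}\ge 0$, which is (\ref{eq_1037}); the non-strict inequalities are immediate from non-negativity of the summands. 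The main obstacle is precisely this last bookkeeping: correctly evaluating $|D(\pi_\kappa x')\cap D(y)\cap\partial V|/2^{N-\kappa}$ in every sub-case — especially at the boundaries $t=s$ and $t=s-1$, where $x'$ coincides with $y$ or with an ancestor of $y$ — and confirming that the resulting geometric and telescoping sums collapse to (\ref{eq_1034}) and (\ref{eq_1037}). This is routine once one keeps track of the ranges of summation and uses that $p_{s-1,\kappa}=0$ for $\kappa\ge s$.
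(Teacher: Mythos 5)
Your proposal is correct, and it reaches the kernel formulas by a more probabilistic route than the paper. The paper works directly with the explicit kernel representation $K(x,y)=\sum_{\omega:\pi_{d(y)}\omega=y}a_{d(x),dlca(x,\omega)}$ from (\ref{eq_1756}), where $a_{s,r}=b_{s,r}-b_{s-1,(s-1)\wedge r}$, and then grinds through three cases using the algebraic recurrence $p_{s,k}-p_{s-1,k}=-q_s\,p_{s-1,k}$ (i.e.\ (\ref{eq_708})) and a chain of summation identities (the case $x\in D(y)\setminus\{y\}$ is the longest). You instead identify the coefficient of $f(y)$ as the absorption probability $P(x,y)=\PP(R_\tau\in D(y)\mid R_1=x)$, so that $K(x,y)=P(x,y)-P(\pi_{s-1}x,y)$, and then extract the factor $q_s$ by a first-passage/strong-Markov decomposition at the parent, evaluating $P(\pi_{s-1}x,y)$ by conditioning on the minimal depth exactly as the paper does when deriving (\ref{eq_534}). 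I checked your three evaluations of $P(x',y)$ (namely $2^{-t}\sum_{k\le s-1}2^kp_{s-1,k}$ when $x$ is a prefix of $y$, $1-2^{-t}\sum_{k\le t-1}(2^t-2^k)p_{s-1,k}$ when $y$ is a strict prefix of $x$, and $2^{-t}\sum_{k\le r}2^kp_{s-1,k}$ otherwise), and after substitution and use of $\sum_k p_{s-1,k}=1$ they reproduce (\ref{eq_1034}) and (\ref{eq_1037}), including the boundary instances $x=y$ and $y=\pi_{s-1}x$ and the degenerate cases $s=1$ and $s=N$. What your route buys is transparency: the sign, and the meaning of the prefactor $q_s$ (probability of never leaving $D(x)$), are immediate, and the heaviest of the paper's summation manipulations is avoided; what the paper's route buys is that it stays purely algebraic once (\ref{eq_534}) is in hand, invoking no further probabilistic reasoning. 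Since the recurrence (\ref{eq_708}) is itself the one-step first-passage fact, the two arguments are close relatives, but yours is organized around a genuinely different decomposition of $K(x,y)$ and is complete apart from the routine case bookkeeping you explicitly flag.
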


\begin{proof} By (\ref{eq_552}) and (\ref{eq_1842}) we have, for any $x \in E$,
\begin{align}  \nonumber T f(x) & = \tilde{T} \tilde{f}(x) - \tilde{T} \tilde{f}(\pi_{d(x)-1} x)
\\ & =  \sum_{\omega \in \{0, 1 \}^N} b_{d(x), dlca(x,\omega)} \tilde{f}(\omega) -
\sum_{\omega \in \{0, 1 \}^N} b_{d(x)-1, dlca(\pi_{d(x)-1} x,\omega)} \tilde{f}(\omega) \nonumber
\\ & = \sum_{\omega \in \{0,1\}^N} a_{d(x), dlca(x,w)} \tilde{f}(\omega) 
\label{eq_1739}
\end{align}
where for any $0 \leq r \leq s$, by (\ref{eq_1719}) and (\ref{eq_534}),
  \begin{equation} \label{eq_1740_} a_{s,r} = b_{s,r} - b_{s-1, \min \{s-1, r \}}
= \sum_{k=0}^r 2^{k-N} p_{s,k} - \sum_{k=0}^{\min \{s-1, r \}} 2^{k-N} p_{s-1,k}. \end{equation}
Hence, by (\ref{eq_1756_}) and (\ref{eq_1739}),
\begin{align*}  T f(x) &  = \sum_{\omega \in \{0,1\}^N} a_{d(x), dlca(x,\omega)} \sum_{i=1}^{N} f(\pi_i \omega) \nonumber
\\& = \sum_{y \in E} \left[ \sum_{\omega \in \{0,1\}^N ; \pi_{d(y)} \omega = y} a_{d(x), dlca(x,\omega)} \right] f(y) = \sum_{y \in E} K(x,y) f(y),
\end{align*}
where the kernel $K$ of the operator $T$ satisfies, for any $x, y \in E$,
\begin{equation}
 K(x,y)  = \sum_{\omega \in \{0,1\}^N ; \pi_{d(y)} \omega = y} a_{d(x), dlca(x,\omega)}.
 \label{eq_1756}
\end{equation}
Fix $x, y \in E$ with $s = d(x), t = d(y)$ and $r = dlca(x,y)$.
Let us consider first the case where $x$ is {\it not} a descendant of $y$. This means
that the prefix of $y$ that is shared by $x$, is not the entire string $y$.
Hence for any $\omega \in \{0,1\}^N$ with $\pi_{d(y)} \omega = y$ we have $dlca(x,\omega) =
dlca(x,y) \leq d(y) - 1$. Therefore, from (\ref{eq_1756}) and (\ref{eq_1740_}),
\begin{align*}
 K(x,y) & = 2^{N-d(y)} a_{d(x), dlca(x,y)} &  = 2^{N- t} \left[ b_{s,r} - b_{s-1, (s-1) \wedge r} \right]
= \sum_{k=0}^r 2^{k-t} p_{s,k} -
\sum_{k=0}^{(s-1) \wedge r} 2^{k-t} p_{s-1,k},
\end{align*}
as $b_{s,r} = \sum_{k=0}^r 2^{k - N} p_{s,k}$. Since $p_{s,s} = q_s$, we have
\begin{equation}  K(x,y) =  \delta_{rs} 2^{s-t} q_s + \sum_{k=0}^{(s-1) \wedge r} 2^{k-t} \left[ p_{s,k} - p_{s-1, k} \right]
= q_s \cdot \left[ \delta_{rs} 2^{s-t}  - \sum_{k=0}^{(s-1) \wedge r} 2^{k-t} p_{s-1, k} \right],
\label{eq_1035}
\end{equation}
where we used  the relation (\ref{eq_708}), which implies that when $k \leq s-1$,
\begin{equation}  p_{s,k} - p_{s-1,k} = - q_s \cdot p_{s-1, k}. \label{eq_1758} \end{equation}
We may now prove the conclusion of the proposition in the case where $x \not \in D(y)$ and $y \not \in D(x)$.
Indeed, in this case $r \leq s \wedge t - 1$ and formula (\ref{eq_1035}) applies. Since $\delta_{rs} =0$
in this case, we deduce formula (\ref{eq_1034}) from (\ref{eq_1035}).

\medskip The next case we consider is the case where $r = s \leq t-1$, or equivalently, where $y \in D(x) \setminus \{x \}$.
Thus $x \not \in D(y)$ and formula (\ref{eq_1035}) applies. Recalling that $\sum_{k=0}^{s-1} p_{s-1,k} = 1$  we obtain from (\ref{eq_1035}) that
\begin{align*} \nonumber K(x,y) = q_s \cdot \sum_{k=0}^{s-1} (2^{s-t} - 2^{k-t}) p_{s-1, k}
= q_s \cdot 2^{-t} \cdot \sum_{k=0}^{s-1} (2^{r} - 2^{k}) p_{s-1, k}
,
\end{align*}
proving formula (\ref{eq_1037}) in the case  $y \in D(x) \setminus \{x \}$.

\medskip We move on to the case where $x \in D(y) \setminus \{ y \}$, thus $dlca(x,y) = t \leq s-1$. In this case,
by applying (\ref{eq_1756}), (\ref{eq_1740_}), (\ref{eq_534}) and then (\ref{eq_1758}),
\begin{align*}
 K(x,y)  & = \sum_{\omega \in \{0,1\}^N ; \pi_{d(y)} \omega = y} a_{d(x), dlca(x,\omega)}
 = 2^{N-d(x)} a_{d(x), d(x)} + \sum_{k=d(y)}^{d(x)-1} 2^{N-k-1} a_{d(x), k}
 \\ & = 2^{N-s} a_{s,s} + \sum_{k=t}^{s-1} 2^{N-k-1} a_{s, k}
 = 2^{N-s} (b_{s,s} - b_{s-1, s-1}) + \sum_{k=t}^{s-1} 2^{N-k-1} (b_{s, k} - b_{s-1, k})
 \\ & = 2^{N-s} \left[ \sum_{k=0}^s 2^{k-N} p_{s,k} - \sum_{k=0}^{s-1} 2^{k-N} p_{s-1,k}
 \right] + \sum_{k=t}^{s-1} 2^{N-k-1} \sum_{\ell=0}^k 2^{\ell-N} (p_{s, \ell} - p_{s-1, \ell})
 \\ & = q_{s} - q_s \sum_{k=0}^{s-1} 2^{k-s} p_{s-1, k}  - q_s \sum_{\ell=0}^{s-1} \sum_{k=\ell \vee t}^{s-1} 2^{\ell-k-1} p_{s-1, \ell}
 \\ & = q_s \left( 1 - \sum_{k=0}^{s-1} 2^{k-s} p_{s-1, k} - \sum_{\ell=0}^{s-1} [2^{\ell-\ell \vee t} - 2^{\ell-s}] p_{s-1, \ell} \right) \\ & = q_s \left( 1 - \sum_{k=0}^{s-1} 2^{k-k \vee t} p_{s-1,k} \right) = q_s \cdot \left( 1 - \sum_{k=0}^{t-1} 2^{k-t} p_{s-1,k} - \sum_{k=t}^{s-1}  p_{s-1,k} \right)
 \\ & = q_s \cdot  \sum_{k=0}^{t-1} (1 - 2^{k-t}) p_{s-1,k}=
 q_s \cdot  2^{-t} \cdot \sum_{k=0}^{t-1} (2^t - 2^{k}) p_{s-1,k}.
 \end{align*}
 Since $r = t$ in this case, we have proved formula (\ref{eq_1037}) in the case
 where $y \in D(x) \setminus \{ x \}$. Finally, the last case that remains is when $x = y$. In this case $r=s=t$ and
\begin{align*} K(x,y)  & = \sum_{\omega \in \{0,1\}^N ; \pi_{d(y)} \omega = y} a_{d(x), dlca(x,\omega)} = 2^{N-d(x)} a_{d(x), d(x)} = 2^{N-s} a_{s,s} = 2^{N-s} (b_{s,s} - b_{s-1, s-1})
\\ & = 2^{N-s} \left[ \sum_{k=0}^s 2^{k-N} p_{s,k} - \sum_{k=0}^{s-1} 2^{k-N} p_{s-1,k}
\right] = q_{s} - q_s \sum_{k=0}^{s-1} 2^{k-s} p_{s-1, k}
\\ & = q_s \cdot \sum_{k=0}^{s-1} (1 - 2^{k-s}) p_{s-1, k},
\end{align*}
 completing the proof of formula (\ref{eq_1037}).
\end{proof}

\medskip
 {\it Some examples.}
\nopagebreak
\begin{enumerate}
\item The simplest example is when $q_s = 1$ for all $s \geq 1$. In this case the operator $\tilde{T}$ is the familiar averaging operator. That is, the extension operator $\tilde{T}$ is the operator that assigns to each vertex the average of the values at the leaves of its subtree.	
In this case $$ p_{s,r} = \delta_{s,r}. $$

\item Consider the
case where the invariant random walk
is such that $X_i := d(R_i)$ is a symmetric random walk on $\{0,\ldots,N \}$, i.e.,
the probability to jump from $i$ to $i+1$ is exactly $1/2$ for $i=1,\ldots,N-1$.
Recall that $q_s$ is the probability to never leave the subtree when starting at a vertex of depth $s$.
We claim that in this example, for $s=1,\ldots,N$,
\begin{equation}  q_s = \frac{1}{N-s+1}.
\label{eq_1000} \end{equation}
Indeed, the function $f(i) = i$ is harmonic on $\{0,1,\ldots,N\}$ and hence $f(X_i)$ is a martingale.
Thus for any stopping time $\tilde{\tau}$ we have $f(X_1) = \EE f(X_{\tilde{\tau}})$.
We pick the stopping time
$$ \tilde{\tau} = \min \{ \, i ; X_i \in \{ s-1, N \} \, \} $$ and obtain (\ref{eq_1000}) since
$$ N \cdot q_s + (s-1) \cdot \left(1 - q_s \right) = s. $$
Next we use Lemma \ref{lem_605} and find a formula for $p_{s,r}$.
Since  formula (\ref{eq_1000}) is valid for any $s \geq 1$, we conclude that for any $r \geq 0$ and $s \geq r+1$,
\begin{equation}  \prod_{k=r+1}^s (1 - q_k) = \prod_{k=r+1}^s \frac{N-k}{N-k+1} = \frac{N-s}{N-r}. \label{eq_1057} \end{equation}
Formula (\ref{eq_1057}) is actually valid for any $0 \leq r \leq s$, since an empty product equals one.
Recall that $q_0 = 1$. We thus conclude from Lemma \ref{lem_605} that
for $s =0,\ldots,N-1$,
$$ p_{s,r} = \frac{N-s}{N-r} \cdot \left \{ \begin{array}{cc} 1 & r = 0 \\
\frac{1}{N-r+1} & 1 \leq r \leq s \end{array} \right. $$
while $p_{N,r} = \delta_{N,r}$.

\item Let $0 < \delta < 1$, and consider the case where  $(X_i)$ is a random
walk on $\{0, \ldots, N \}$ such that the probability to jump from $k$ to $k+1$ equals $1/2$ if $k < N-1$, and it equals $\delta$ if $k = N-1$. A harmonic function here is
$$ f(k) = \left \{ \begin{array}{cc} k & k \leq N-1 \\  N-2 + 1/\delta & k = N \end{array} \right. $$
Therefore, for $s=1,\ldots,N-1$,
$$ (N-2 + 1/\delta) \cdot q_s + (s-1) (1 - q_s) = s. $$
Thus $q_0 = q_N = 1$ while for $s=1,\ldots,N-1$,
$$ q_s = \frac{1}{N-s+1/\delta - 1}. $$
Hence for any $s \leq N-1$ and $r \leq s-1$,
$$ \prod_{k=r+1}^s (1 - q_k) = \prod_{k=r+1}^s \frac{N-k+\delta^{-1} - 2}{N-k+\delta^{-1} - 1}
= \frac{N-s+\delta^{-1} - 2}{N-r+\delta^{-1} - 2}. $$
We  conclude from Lemma \ref{lem_605} that
for $s =0,\ldots,N-1$ and $0 \leq r \leq s$,
$$ p_{s,r} = q_r \cdot \prod_{k=r+1}^s (1 - q_k)
= \frac{N-s + \delta^{-1} -2}{N-r+ \delta^{-1} -2} \cdot \left \{ \begin{array}{cc} 1 & r = 0 \\
\frac{1}{N-r+1/\delta - 1} & 1 \leq r \leq s \end{array} \right. $$
while $p_{N,r} = \delta_{N,r}$.
\end{enumerate}

We conclude this section with the following:

\begin{lemma} For any numbers $q_1,\ldots,q_{N-1} \in (0,1)$ there exists a random walk  $$ X_1,X_2,\ldots \in  \{0,\ldots, N \} $$
satisfying (\ref{eq_X}) with $ \tau = \min \{i \geq 1 \, ; \, X_i = N \}$ for $s=1,\ldots,N-1$.
\label{lem_1424}
\end{lemma}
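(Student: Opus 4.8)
The plan is to realize $(X_i)$ as a nearest-neighbour random walk on $\{0,\ldots,N\}$ and to read off its transition probabilities from the prescribed numbers $q_1,\ldots,q_{N-1}$ through a one-step recursion. Concretely, I would look for a walk in which, at each interior state $1\le k\le N-1$, one moves from $k$ to $k+1$ with some probability $p_k\in(0,1)$ and to $k-1$ with probability $1-p_k$, while the endpoints are reflecting (from $0$ move to $1$ and from $N$ move to $N-1$); the behaviour at the endpoints is immaterial for (\ref{eq_X}), but making the chain irreducible guarantees that $\tau$, and more generally every hitting time we use below, is finite almost surely. Writing $\rho_k=(1-p_k)/p_k\in(0,\infty)$, the whole construction reduces to choosing the numbers $\rho_1,\ldots,\rho_{N-1}\in(0,\infty)$.

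The key tool is the classical scale-function (gambler's-ruin) identity. Fix $s\in\{1,\ldots,N-1\}$ and let $\phi$ be the function on $\{s-1,\ldots,N\}$ with $\phi(s-1)=0$, $\phi(s)=1$, and $\phi(k+1)-\phi(k)=\rho_k\bigl(\phi(k)-\phi(k-1)\bigr)$ for $s\le k\le N-1$; then $\phi(k)-\phi(k-1)=\prod_{j=s}^{k-1}\rho_j$ and so $\phi(N)=\sum_{m=s}^{N}\prod_{j=s}^{m-1}\rho_j=:\Phi_s$ (empty products equal $1$). Since $p_k\in(0,1)$ on the interior, the walk started at $s$ hits $\{s-1,N\}$ in finite time almost surely, so $\phi$ evaluated along the stopped walk is a bounded martingale, and optional stopping gives that the probability, starting from $s$, of reaching $N$ before $s-1$ equals $\phi(s)/\phi(N)=1/\Phi_s$. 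Because the steps are nearest-neighbour, any visit to a state of depth $<s$ is preceded by a visit to $s-1$, so the event in (\ref{eq_X}) is exactly this event; hence the requirement becomes $\Phi_s=1/q_s$ for every $s=1,\ldots,N-1$.

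It remains to solve these equations. From its definition $\Phi_s$ satisfies $\Phi_s=1+\rho_s\,\Phi_{s+1}$ with the convention $\Phi_N=1$. Imposing $\Phi_s=1/q_s$ (and setting $q_N:=1$, consistent with $\Phi_N=1$) yields the explicit solution
\[
 \rho_s \;=\; \frac{q_{s+1}(1-q_s)}{q_s}\,, \qquad s=1,\ldots,N-1,
\]
and correspondingly $p_s=(1+\rho_s)^{-1}$. Since $q_s\in(0,1)$ for $1\le s\le N-1$ and $q_{s+1}\in(0,1]$, each $\rho_s$ lies in $(0,\infty)$, hence each $p_s$ lies in $(0,1)$, so the walk is well defined; substituting back gives $\Phi_s=1/q_s$ for all $s$, which is (\ref{eq_X}).

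I do not expect a genuine obstacle: the only points needing care are the bookkeeping convention $q_N=1$ at the top of the recursion, the strict positivity and finiteness of $\rho_s$ (which uses $q_s<1$ for interior $s$), and identifying $\{\forall\,1\le i\le\tau,\ X_i\ge s\}$ with the event ``reach $N$ before $s-1$''. As a sanity check, $q_s=1/(N-s+1)$ gives $\rho_s\equiv1$, i.e.\ the symmetric walk of the second example above, while the data of the third example gives $\rho_s\equiv1$ for $s\le N-2$ and $\rho_{N-1}=(1-\delta)/\delta$, i.e.\ $p_{N-1}=\delta$, as expected.
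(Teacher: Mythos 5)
Your proof is correct. It produces the same walk as the paper, but by a different derivation: the paper obtains, from a first-step (renewal) decomposition of the event in (\ref{eq_X}), the one-step recurrence $q_s = x_s\left(q_{s+1}+(1-q_{s+1})q_s\right)$ for the up-probabilities $x_s$, sets $q_0=q_N=1$, and solves this single equation for each $x_s\in(0,1)$; you instead compute the hitting probability in closed form via the scale function and optional stopping, so that the probability in (\ref{eq_X}) equals $1/\Phi_s$ with $\Phi_s=\sum_{m=s}^{N}\prod_{j=s}^{m-1}\rho_j$, and then solve the algebraic recursion $\Phi_s=1+\rho_s\Phi_{s+1}$, $\Phi_N=1$, to get $\rho_s=q_{s+1}(1-q_s)/q_s$. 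The two prescriptions coincide: $p_s=(1+\rho_s)^{-1}=q_s/\left(q_{s+1}+(1-q_{s+1})q_s\right)=x_s$. Your route is a bit longer but makes explicit several points the paper leaves implicit: the downward induction verifying $\Phi_s=1/q_s$, the almost-sure finiteness of $\tau$, and the identification of the event $\{\forall\, i\le\tau,\ X_i\ge s\}$ with ``hit $N$ before $s-1$'' (via the nearest-neighbour property); the paper's renewal recurrence is shorter and avoids the scale-function machinery. The only other difference is at the root: the paper's convention $q_0=1$ forces the walk to move up from $0$ with probability one, while you take $0$ reflecting, which is immaterial since the lemma only requires (\ref{eq_X}) for $s=1,\ldots,N-1$.
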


\begin{proof} Write $x_k$ for the probability that the random walk jumps from  $k$ to $k+1$. Then for $s=0,\ldots,N-1$,
\begin{equation}  q_{s} = x_s (q_{s+1} + (1-q_{s+1}) q_s), \label{eq_944} \end{equation}
where we set $q_0 = q_N = 1$. The number $x_s \in (0,1)$ is determined by equation (\ref{eq_944})
because  $q_{s+1} + (1-q_{s+1}) q_s$ is larger than  $q_{s+1} q_s + (1-q_{s+1}) q_s = q_s > 0$.
\end{proof}

\section{The ancestral and non-ancestral parts of the kernel}
\label{sec3}

We need to bound the operator norm in $L^p(E)$ of the operator $T$ whose kernel is described in Proposition \ref{prop_1038}.
Let us consider first the {\it non-ancestral} part of the operator, given by
the kernel
\begin{equation} K_0(x,y) = K(x,y) \cdot 1_{\{ x \not \in D(y), y \not \in D(x) \}} = -1_{ \{ r \leq s \wedge t - 1 \}} \cdot q_s \cdot 2^{-t} \cdot \sum_{k=0}^r 2^k p_{s-1,k}. \label{eq_1357} \end{equation}
Here as usual
$s = d(x), t = d(y)$ and $r = dlca(x,y)$.
Write $T_0: L^p(E) \rightarrow L^p(E)$ for the operator whose kernel is $K_0$.
A function $f: E \rightarrow \RR$ is  invariant under the symmetries of the tree, or {\it invariant} in short,
if it takes the form $$ f(x)= F(d(x)) $$
for some function $F: \{1,\ldots,N \} \rightarrow \RR$.  The operator $T_0$ is equivariant under the symmetries of the tree.
Therefore, if $f(x) = F(d(x))$ is an invariant function, then so is $T_0 f$.
In fact, in the case where $f(x) = F(d(x))$ we can write
\begin{equation}  T_0 f(x) = \sum_{t=1}^N L_0(d(x),t) F(t)
\label{eq_1353} \end{equation}
for a certain kernel $L_0(s,t)$ defined for $s,t=1,\ldots,N$.

\begin{lemma} For $s,t=1,\ldots,N$,
	$$ L_0(s,t) = -q_s \cdot \sum_{k=0}^{m -1 }   (1 - 2^{k-m}) p_{s-1,k} \leq 0. $$
\label{lem_1144}
\end{lemma}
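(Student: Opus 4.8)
The plan is to compute $L_0(s,t)$ directly from its definition \eqref{eq_1353}. Fix an arbitrary edge $x$ with $d(x)=s$; by the equivariance of $T_0$ the quantity $\sum_{y\in E,\, d(y)=t} K_0(x,y)$ does not depend on the choice of $x$, and by \eqref{eq_1353} this quantity is exactly $L_0(s,t)$. Writing $m := s\wedge t$, the crucial observation is that, according to \eqref{eq_1357}, $K_0(x,y)$ depends on $y$ only through $r=dlca(x,y)$, and it vanishes unless $r\le m-1$. So I would organize the sum over the $2^t$ vertices $y$ of depth $t$ according to the value of $r\in\{0,\ldots,m\}$: the vertices with $r=m$ are precisely the ancestors or descendants of $x$ at depth $t$, on which $K_0\equiv 0$, so only the values $0\le r\le m-1$ contribute.

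Next I would record the combinatorial count: for $0\le r\le m-1$ there are exactly $2^{t-r-1}$ vertices $y$ with $d(y)=t$ and $dlca(x,y)=r$, since the first $r$ bits of $y$ must agree with those of $x$, the $(r+1)$-st bit is forced to be the opposite of the $(r+1)$-st bit of $x$, and the remaining $t-r-1$ bits are free. (One checks that this sums, together with the ancestral vertices, to $2^t$, uniformly in whether $t\le s$ or $t\ge s$.) Substituting \eqref{eq_1357} gives
\[
L_0(s,t)=\sum_{r=0}^{m-1}2^{t-r-1}\left(-q_s\,2^{-t}\sum_{k=0}^r 2^k p_{s-1,k}\right)
=-q_s\sum_{r=0}^{m-1}2^{-r-1}\sum_{k=0}^r 2^k p_{s-1,k}.
\]
Swapping the order of summation and evaluating the geometric sum $\sum_{r=k}^{m-1}2^{-r-1}=2^{-k}-2^{-m}$ yields
\[
L_0(s,t)=-q_s\sum_{k=0}^{m-1}(2^{-k}-2^{-m})\,2^k p_{s-1,k}
=-q_s\sum_{k=0}^{m-1}\bigl(1-2^{k-m}\bigr)p_{s-1,k},
\]
which is the claimed identity.

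Finally, nonpositivity is immediate: $q_s>0$, each $p_{s-1,k}\ge 0$ by Lemma \ref{lem_605}, and $1-2^{k-m}\ge 0$ for every $k\le m-1$, so $L_0(s,t)\le 0$. I do not expect a real obstacle here — the argument is essentially bookkeeping built on Proposition \ref{prop_1038}. The only points needing a little care are the combinatorial count of depth-$t$ vertices with prescribed $dlca$ to $x$, and the remark that the indicator $1_{\{r\le s\wedge t-1\}}$ in \eqref{eq_1357} is automatically handled by restricting the outer sum to $0\le r\le m-1$ (the $r=m$ vertices being exactly the ancestral ones, for which \eqref{eq_1719} shows $K_0$ vanishes); one should also confirm at the outset that $m$ denotes $s\wedge t$, matching the notation of Proposition \ref{prop_1038}.
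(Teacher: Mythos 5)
Your proof is correct and follows essentially the same route as the paper: the same count $n(t;s,r)=2^{t-r-1}$ of depth-$t$ vertices with $dlca(x,y)=r\le m-1$, followed by the same interchange of summation and evaluation of the geometric sum $\sum_{r=k}^{m-1}2^{-r-1}=2^{-k}-2^{-m}$. Your extra bookkeeping (the $r=m$ vertices being exactly the ancestral ones and the totals summing to $2^t$) is a fine sanity check but adds nothing beyond the paper's argument.
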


	\begin{proof} Let $r \leq \min \{t, s \}-1$.  A moment of reflection reveals that for $x \in E$ with $d(x) = s$,
		$$ n(t ;s, r) := \# \{ y \in E \,; \, d(y) = t, dlca(x, y) = r \} =
		 2^{t-r-1}.
		$$
		 By (\ref{eq_1357}), (\ref{eq_1353}) and the definition of $T_0$,
		for any $x \in E$ with $d(x) = s$,
		\begin{align*} L_0(s,t) & = \sum_{y \in E ; d(y) = t} K_0(x, y)
		 = -\sum_{r=0}^{t \wedge s -1 } n(t ; s,r) \cdot q_s \cdot 2^{-t} \cdot \sum_{k=0}^r 2^{k} p_{s-1,k}. \end{align*}
		 Denote $m = s \wedge t$. Then for  $s,t=1,\ldots,N$,
	\begin{align*}
	 L_0(s,t) &= -q_s \cdot \sum_{r=0}^{t \wedge s -1 }    \sum_{k=0}^r 2^{k-r-1} p_{s-1,k}
	 = -q_s \cdot \sum_{k=0}^{m -1 }    \sum_{r=k}^{m-1} 2^{k-r-1} p_{s-1,k}
\\&	 = -q_s \cdot \sum_{k=0}^{m -1 }   (1 - 2^{k-m}) p_{s-1,k}.
		\end{align*}
\end{proof}

Write $\Omega_N = \{1,\ldots,N \}$ and for $F: \Omega_N \rightarrow \RR$ define
\begin{equation}  \| F \|_p = \| F \|_{L^p(\Omega_N)} = \left( \sum_{k=1}^N 2^k \cdot W_k \cdot |F(k)|^p \right)^{1/p}.
\label{eq_334} \end{equation}
Observe that $\| f \|_{L^p(E)} = \| F \|_p$ if $f(x) = F(d(x))$.
Let $$ S_0 F(s) = \sum_{t=1}^N L_0(s,t) F(t) $$ so that by (\ref{eq_1353}),
\begin{equation}  T_0 (F \circ d) = (S_0 F) \circ d. \label{eq_1040} \end{equation}
For $f,g: E \rightarrow \RR$ we consider the scalar product
$$ \langle f, g \rangle = \sum_{x \in E} W_{d(x)} f(x) g(x) $$
while for $F, G: \Omega_N \rightarrow \RR$ we set
$$ \langle F, G \rangle := \langle F \circ d, G \circ d \rangle = \sum_{k=1}^N 2^k  W_{k} F(k) G(k). $$
The adjoint operators $T_0^*$ and $S_0^*$ are defined with respect to these scalar products.
The following lemma is probably well-known to experts (see, e.g.,
Howard and Schep \cite{HS} for a related argument), and its proof is provided for completeness.

\begin{lemma} Let $1 < p <\infty$. Then the norm of the operator $T_0: L^p(E) \rightarrow L^p(E)$ is attained at an invariant, non-negative function $f$, and it equals to the norm of the operator $S_0: L^p(\Omega_n) \rightarrow L^p(\Omega_n)$. \label{lem_1744}
\end{lemma}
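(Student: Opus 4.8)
The plan is to exploit two structural features made explicit in Proposition~\ref{prop_1038} and Lemma~\ref{lem_1144}: the kernels $K_0$ and $L_0$ both have constant sign ($\le 0$), and the operators $T_0$ and $S_0$ are assembled from data invariant under the symmetry group $G$ of the tree. \emph{Step 1 (reduce to non-negative functions).} Since $K_0\le 0$, for any $f\in L^p(E)$ we have the pointwise bound $|T_0 f(x)|=\big|\sum_y K_0(x,y)f(y)\big|\le\sum_y|K_0(x,y)|\,|f(y)|=|T_0(|f|)(x)|$, hence $\|T_0 f\|_p\le\|T_0|f|\,\|_p$; thus $\|T_0\|_{p\to p}=\sup\{\|T_0 f\|_p:f\ge 0,\ \|f\|_p=1\}$, and the identical argument applies to $S_0$ using $L_0\le 0$. \emph{Step 2 (dualize).} Pair $L^p(E)$ with the analogously-defined $L^q(E)$ via $\langle f,g\rangle=\sum_{x\in E}W_{d(x)}f(x)g(x)$; then for $f\ge 0$ we have $T_0 f\le 0$, so $\|T_0 f\|_p=\sup\{\langle -T_0 f,g\rangle:g\ge 0,\ \|g\|_q=1\}$. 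Writing $B(f,g):=\langle -T_0 f,g\rangle=\sum_{x,y\in E}W_{d(x)}|K_0(x,y)|\,f(y)g(x)\ge 0$, we obtain $\|T_0\|_{p\to p}=\sup B(f,g)$ over non-negative $f,g$ on the respective unit spheres, a supremum attained by finite-dimensional compactness.

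\emph{Step 3 (symmetrize the bilinear form).} The coefficient $|K_0(x,y)|$ depends only on $d(x)$, $d(y)$ and $dlca(x,y)$, hence is $G$-invariant, and $\|\cdot\|_p$ is $G$-invariant; consequently $B(f\circ\sigma,g\circ\sigma)=B(f,g)$ for every $\sigma\in G$. Given non-negative $f,g$, put $\bar f(y)=\big(|G|^{-1}\sum_{\sigma\in G}f(\sigma y)^p\big)^{1/p}$ and $\bar g(x)=\big(|G|^{-1}\sum_{\sigma\in G}g(\sigma x)^q\big)^{1/q}$; these are invariant and satisfy $\|\bar f\|_p=\|f\|_p$, $\|\bar g\|_q=\|g\|_q$. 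Averaging $B$ over $G$ and applying H\"older to the average over $\sigma$,
\[ B(f,g)=\sum_{x,y}W_{d(x)}|K_0(x,y)|\Big(|G|^{-1}\sum_{\sigma}g(\sigma x)f(\sigma y)\Big)\le\sum_{x,y}W_{d(x)}|K_0(x,y)|\,\bar g(x)\bar f(y)=B(\bar f,\bar g), \]
where the inequality also uses $|K_0|\ge 0$. Hence the supremum in Step 2 is attained at invariant non-negative functions; taking an optimal pair $(f,g)$ on the unit spheres and passing to $(\bar f,\bar g)$, the chain $\|T_0\|_{p\to p}=B(f,g)\le B(\bar f,\bar g)\le\|T_0\bar f\|_p\,\|\bar g\|_q=\|T_0\bar f\|_p\le\|T_0\|_{p\to p}\|\bar f\|_p$ forces $\|T_0\bar f\|_p=\|T_0\|_{p\to p}\|\bar f\|_p$, so the norm of $T_0$ is attained at the invariant non-negative function $\bar f$.

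\emph{Step 4 (descend to $\Omega_N$).} For invariant functions $f=F\circ d$, $g=G\circ d$ one has $\|f\|_{L^p(E)}=\|F\|_{L^p(\Omega_N)}$ and $\|g\|_{L^q(E)}=\|G\|_{L^q(\Omega_N)}$ by \eqref{eq_334}, while \eqref{eq_1040} gives $-T_0 f=-(S_0 F)\circ d$ and therefore $B(f,g)=\langle -S_0 F,G\rangle_{\Omega_N}=\sum_{k}2^k W_k(-S_0F)(k)G(k)$. Combining Step 3 with the analogue of Steps 1--2 for $S_0$ (legitimate since $L_0\le 0$ by Lemma~\ref{lem_1144}) yields
\[ \|T_0\|_{p\to p}=\sup\{\langle -S_0F,G\rangle_{\Omega_N}:F,G\ge 0,\ \|F\|_{L^p(\Omega_N)}=\|G\|_{L^q(\Omega_N)}=1\}=\|S_0\|_{L^p(\Omega_N)\to L^p(\Omega_N)}, \]
which is the assertion.

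The step I expect to be the main obstacle is Step 3: symmetrizing $f$ by itself does not increase $\|T_0 f\|_p$ when $p\ne 2$ (the triangle inequality points the wrong way), so one is forced to symmetrize the bilinear form in \emph{both} slots, and this is only permissible because $K_0$ has a fixed sign --- this is precisely where the non-positivity in \eqref{eq_1357}, coming from Proposition~\ref{prop_1038}, enters. Everything else is routine bookkeeping with the weighted $L^p$ and $L^q$ norms and the dictionary between functions on $E$ and functions on $\Omega_N$.
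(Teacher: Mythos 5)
Your argument is correct, but it is a genuinely different route from the paper's. The paper also reduces to the positive operators $\cT=-T_0$, $\cS=-S_0$, but then takes an extremal function $F\geq 0$ for $\cS$ (after perturbing the kernels to be strictly positive), derives the Lagrange-multiplier equation $\cS^*(\cS F)^{p-1}=\lambda F^{p-1}$, and uses the pointwise H\"older inequality $\cT(uv)\le \cT(u^p)^{1/p}\cT(v^q)^{1/q}$ with the positive invariant eigenfunction $f=F\circ d$ (the Howard--Schep device) to force $\|\cT\|\le\lambda^{1/p}=\|\cS\|$; equality then follows because $\cS$ is the restriction of $\cT$ to invariant functions. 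You instead dualize to the non-negative bilinear form $B(f,g)=\langle -T_0f,g\rangle$ and symmetrize \emph{both} slots by group averaging, using H\"older over the group average $|G|^{-1}\sum_\sigma g(\sigma x)f(\sigma y)\le\bar g(x)\bar f(y)$ together with the fixed sign of $K_0$; this yields directly that the supremum is attained at an invariant non-negative pair, whence $\|T_0\|=\|S_0\|$ by the dictionary $T_0(F\circ d)=(S_0F)\circ d$ and the matching of weighted norms. Your route buys a more elementary and self-contained proof: no approximation to strictly positive kernels, no extremal eigenvalue equation, and attainment at an invariant function falls out of the equality case of the chain of inequalities; the paper's route buys a statement that does not need the full symmetry group (only a single positive invariant "eigenfunction"), which is the standard tool for norms of positive kernels. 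One small point you should make explicit: you identify $G$-invariant functions with functions of the depth (so that $\bar f=\bar F\circ d$ descends to $\Omega_N$), which requires the observation that the symmetry group acts transitively on each level of the complete binary tree; this is true and easy, but it is the hinge between your Step 3 and Step 4.
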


\begin{proof} Denote momentarily $\cT = -T_0$ and $\cS = -S_0$.	
By Lemma \ref{lem_1144} the kernel $-L_0$ of the operator $\cS$ is non-negative,
and by (\ref{eq_1357}) the kernel of the operator $\cT$ is non-negative as well.
By approximation, we may assume that these two kernels are strictly positive, while
keeping condition (\ref{eq_1040}), thus
\begin{equation}  \cT (F \circ d) = (\cS F) \circ d. \label{eq_1040_} \end{equation}
We deduce that $\cT^* (F \circ d) = (\cS^* F) \circ d$.
 By compactness,
$$ \sup_{0 \not \equiv F \in L^p(\Omega_N)} \frac{\| \cS F \|_p}{\| F \|_p} $$
is attained at some function $F$. Since the kernel of $\cS$ is non-negative,
we may assume that the extremal function $F$ is non-negative.
By the Lagrange multipliers theorem, the function $F$ satisfies a certain eigenvalue equation, and in fact there exists $\lambda \in \RR$ such that
\begin{equation}  \cS^*(\cS F)^{p-1} = \lambda F^{p-1}. \label{eq_1027} \end{equation}
Since the kernel of $\cS$ is positive and $F$ is non-negative and not identically zero,
it follows from (\ref{eq_1027}) that $F$ is actually positive.
The norm of the operator $\cS: L^p(\Omega_n) \rightarrow L^p(\Omega_n)$ equals $\lambda^{1/p} > 0$, since
$$ \| \cS F \|_p^p = \langle (\cS F)^{p-1}, \cS F \rangle = \langle \cS^* (\cS F)^{p-1}, F \rangle = \lambda \langle F^{p-1}, F \rangle = \lambda \| F \|_p^p. $$
Denoting $f = F \circ d$, we find from (\ref{eq_1040_}) that $f$ is a positive invariant function satisfying
 $$ \cT^*(\cT f)^{p-1} = \lambda  f^{p-1}. $$
Since the kernel of $\cT$ is non-negative, we have the pointwise H\"older inequality
$$ \cT(uv) \leq \cT(u^p)^{1/p} \cdot  \cT(v^q)^{1/q},
$$
valid for any non-negative functions $u,v \in  L^p(E)$, where $q = p / (p-1)$.
The operator $\cT$ has a non-negative kernel, and hence its norm is attained at a
non-negative function $u \in L^p(E)$. By the pointwise H\"older inequality,
$$ (\cT u)^p \leq \cT( u^p f^{-p/q}) \cdot (\cT f)^{p/q} = \cT( u^p f^{1-p}) \cdot (\cT f)^{p-1}. $$
Therefore,
\begin{align*}  \| \cT u \|_{L^p(E)}^p  \leq \langle \cT( u^p f^{1-p}),  (\cT f)^{p-1} \rangle
= \langle  u^p f^{1-p},  \cT^* (\cT f)^{p-1} \rangle
 = \lambda  \langle  u^p f^{1-p},  f^{p-1} \rangle
= \lambda  \| u \|_{L^p(E)}^p.
\end{align*}
Thus the norm of $\cT: L^p(E) \rightarrow L^p(E)$
is at most $\lambda^{1/p}$, which is the norm of $\cS: L^p(\Omega_N) \rightarrow L^p(\Omega_N)$. The two norms must therefore
be equal, since the operator $\cS$ is equivalent to the restriction of $\cT$ to the space of invariant functions.
\end{proof}

We move on to the ancestral part of the operator, which according to Proposition \ref{prop_1038} is given by
\begin{equation} K_1(x,y) = K(x,y) - K_0(x,y) =  1_{ \{ r  = s \wedge t  \}} \cdot q_s \cdot 2^{-t} \cdot \sum_{k=0}^{r-1} (2^r - 2^k) p_{s-1,k} \geq 0. \label{eq_1740} \end{equation}
Write $T_1$ for the operator whose kernel is $K_1$. As before, for an invariant function $f(x) = F(d(x))$ we may write
\begin{equation}  T_1 f(x) = \sum_{t=1}^N L_1(d(x),t) F(t)
\label{eq_1741} \end{equation}
for a certain kernel $L_1(s,t)$ defined for $s,t=1,\ldots,N$. We also write
 $$ S_1 F(s) = \sum_{t=1}^N L_1(s,t) F(t). $$
It is possible to use formula (\ref{eq_552}) for the operator
$T$ and the definition (\ref{eq_1842}) of the harmonic extension operator
$\tilde{T}$ and deduce that
\begin{equation} S_0 + S_1 \equiv 0, \label{eq_114} \end{equation}
essentially because the only invariant, harmonic function on the vertices of the tree is the constant function.
An alternative, more direct proof of (\ref{eq_114}) is provided
in the following:

\begin{lemma} Let $1 < p < \infty$. Then the norm of
the operator $T_1: L^p(E) \rightarrow L^p(E)$ is equal to the norm of $S_1: L^p(\Omega_n) \rightarrow L^p(\Omega_n)$.
Additionally, for $s,t=1,\ldots,N$ with $m = s \wedge t$ we have
	$$ L_1(s,t) =
	 q_s \cdot \sum_{k=0}^{m-1} (1 - 2^{k-m}) p_{s-1,k} = -L_0(s,t). $$
	\end{lemma}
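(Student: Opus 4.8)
The plan is to prove the two assertions in turn, each by reduction to material already established. The formula for $L_1(s,t)$ will follow from a direct point-count against the kernel (\ref{eq_1740}), in exact parallel with the proof of Lemma \ref{lem_1144}; and the equality of operator norms will follow by running the proof of Lemma \ref{lem_1744} verbatim, once we know from that formula that the kernel $L_1$ is non-negative.

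For the formula, fix $x \in E$ with $d(x) = s$, fix $t \in \{1,\ldots,N\}$, and put $m = s \wedge t$. By (\ref{eq_1741}) and the definition of $S_1$ we have $L_1(s,t) = \sum_{y \in E,\, d(y) = t} K_1(x,y)$. By (\ref{eq_1740}) the only contributing terms are those with $dlca(x,y) = m$, and for each such $y$ the value $K_1(x,y) = q_s \cdot 2^{-t} \sum_{k=0}^{m-1}(2^m - 2^k) p_{s-1,k}$ does not depend on $y$. It thus remains to count the vertices $y$ with $d(y) = t$ and $dlca(x,y) = m$: when $t \geq s$ this condition says exactly that $y$ is a descendant of $x$, giving $2^{t-s}$ such vertices; when $t < s$ it says exactly that $y = \pi_t(x)$, giving a single vertex. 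In both regimes the count equals $2^{t-m}$, so
$$ L_1(s,t) = 2^{t-m} \cdot q_s \cdot 2^{-t} \sum_{k=0}^{m-1}(2^m - 2^k) p_{s-1,k} = q_s \sum_{k=0}^{m-1}(1 - 2^{k-m}) p_{s-1,k}. $$
Comparing with Lemma \ref{lem_1144} yields $L_1(s,t) = -L_0(s,t)$, hence $S_1 = -S_0$, which is the relation (\ref{eq_114}).

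For the norm identity, observe that since $0 \leq k \leq m-1$ we have $2^{k-m} \leq 2^{-1} < 1$, so every summand in the formula just obtained is non-negative; thus the kernel $L_1$ of $S_1$ is non-negative, and by (\ref{eq_1740}) the kernel $K_1$ of $T_1$ is non-negative as well. Moreover $T_1(F \circ d) = (S_1 F) \circ d$ by (\ref{eq_1741}) and the definition of $S_1$, i.e.\ we have the analogue of (\ref{eq_1040}). Non-negativity of both kernels and this equivariance are the only two features of $T_0, S_0$ used in the proof of Lemma \ref{lem_1744}. Hence that argument applies word for word with $\cT = T_1$ and $\cS = S_1$ (no sign change is now needed, since the kernels are already non-negative): approximate so that both kernels are strictly positive, extract a positive extremal function $F \in L^p(\Omega_N)$ for $S_1$ satisfying $S_1^*(S_1 F)^{p-1} = \lambda F^{p-1}$ with $\lambda = \|S_1\|^p$, set $f = F \circ d$, and apply the pointwise H\"older inequality to the non-negative-kernel operator $T_1$ to conclude that $\|T_1 u\|_{L^p(E)}^p \leq \lambda \|u\|_{L^p(E)}^p$ for every non-negative $u$; since $\|f\|_{L^p(E)} = \|F\|_p$ for invariant $f = F \circ d$, the two norms coincide.

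I do not anticipate a real obstacle here: the only points needing care are verifying the uniform point-count $2^{t-m}$ in the two cases $t \geq s$ and $t < s$ (a matter of unwinding the definition of $dlca$), and confirming that all terms of $L_1$ have the sign required for the $L^p$-reduction of Lemma \ref{lem_1744} to carry over unchanged. Everything else is a transcription of arguments already made for the non-ancestral part.
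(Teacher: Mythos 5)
Your proposal is correct and follows essentially the same route as the paper: the same point-count $\#\{y : d(y)=t,\ dlca(x,y)=m\} = \max\{1,2^{t-s}\} = 2^{t-m}$ applied to the kernel (\ref{eq_1740}) yields the formula $L_1(s,t) = q_s\sum_{k=0}^{m-1}(1-2^{k-m})p_{s-1,k} = -L_0(s,t)$, and the norm equality is obtained exactly as the paper does, by noting that the argument of Lemma \ref{lem_1744} uses only non-negativity of the kernels and the relation $T_1(F\circ d) = (S_1 F)\circ d$. Your write-up merely spells out in more detail the step the paper dispatches with the phrase ``as in Lemma \ref{lem_1744}.''
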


\begin{proof} The first assertion of the lemma follows from the fact that the kernel of $T_1$ is non-negative and invariant under the symmetries of the tree, as in Lemma \ref{lem_1744}. For the second part, let $s,t=1,\ldots,N$ and denote $m = s \wedge t$.
	We claim that for $x \in E$ with $d(x) = s$,
	\begin{equation} n(t ;s, m) := \# \{ y \in E \, ; \, d(y) = t, dlca(x, y) = m \} = \max \{1, 2^{t-s} \}.
	\label{eq_1748}
	\end{equation}
	Indeed, assume first that $t \geq s$. How many $y$'s are there with $d(y) = t$ and $dlca(x, y)= m$? Since $m = s$, the answer is $2^{t-s}$. Next, if $s \geq t$, then the number of such $y$'s is one. This proves (\ref{eq_1748}). Therefore,
	\begin{align*} L_1(s,t) & = \sum_{y \in E ; d(y) = t} K_1(x, y)
		= n(t ; s,m) \cdot  q_s \cdot 2^{-t} \cdot \sum_{k=0}^{m-1} (2^m - 2^k) p_{s-1,k}
		\\ & = \max \{2^{-t}, 2^{-s} \} \cdot q_s \cdot \sum_{k=0}^{m-1} (2^m - 2^k) p_{s-1,k}.
		\\ & = 2^{-m} \cdot q_s \cdot \sum_{k=0}^{m-1} (2^m - 2^k) p_{s-1,k}.
		\end{align*}	
\end{proof}

\begin{corollary} We have
$$ \| T \|_{L^p(E) \rightarrow L^p(E)} \leq 2 \| S_0 \|_{L^p(\Omega_N) \rightarrow L^p(\Omega_N)}. $$
\label{cor_1057}
\end{corollary}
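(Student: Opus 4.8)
The plan is to exploit the splitting $T = T_0 + T_1$ coming from the decomposition $K = K_0 + K_1$ of the kernel in (\ref{eq_1357}) and (\ref{eq_1740}), together with the two lemmas just proved that identify the $L^p(E)$-operator norms of $T_0$ and $T_1$ with one-dimensional operator norms, and then simply invoke the triangle inequality for operator norms.

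Concretely, I would proceed as follows. First, Lemma \ref{lem_1744} gives that $\| T_0 \|_{L^p(E) \to L^p(E)} = \| S_0 \|_{L^p(\Omega_N) \to L^p(\Omega_N)}$, since $T_0$ is equivariant with a sign-definite kernel and $S_0$ is its restriction to invariant functions. Second, the lemma immediately preceding this corollary gives both $\| T_1 \|_{L^p(E) \to L^p(E)} = \| S_1 \|_{L^p(\Omega_N) \to L^p(\Omega_N)}$ and the pointwise identity $L_1(s,t) = -L_0(s,t)$ for all $s,t = 1,\ldots,N$, whence $S_1 = -S_0$ as operators on $L^p(\Omega_N)$ and therefore $\| S_1 \|_{L^p(\Omega_N) \to L^p(\Omega_N)} = \| S_0 \|_{L^p(\Omega_N) \to L^p(\Omega_N)}$. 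Combining these, $\| T_1 \|_{L^p(E) \to L^p(E)} = \| S_0 \|_{L^p(\Omega_N) \to L^p(\Omega_N)}$ as well.

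Finally, since $T = T_0 + T_1$, the triangle inequality for the operator norm on $L^p(E)$ yields
$$ \| T \|_{L^p(E) \to L^p(E)} \leq \| T_0 \|_{L^p(E) \to L^p(E)} + \| T_1 \|_{L^p(E) \to L^p(E)} = 2 \, \| S_0 \|_{L^p(\Omega_N) \to L^p(\Omega_N)}, $$
which is the claimed bound.

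There is essentially no obstacle here: the corollary is a bookkeeping consequence of the two norm-identification lemmas and the identity $L_1 = -L_0$ (equivalently $S_0 + S_1 \equiv 0$, i.e.\ (\ref{eq_114})). The only point worth stating explicitly is that the triangle inequality is used, so the factor $2$ is genuinely the cost of splitting into the ancestral and non-ancestral parts rather than handling $K$ directly; if one wanted to avoid the factor $2$ one would have to estimate $S_0 + S_1 = 0$ against $S_0 - S_1 = 2 S_0$ more cleverly, but for the purposes of Theorem \ref{thm_1226} the bound above suffices.
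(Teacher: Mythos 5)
Your proof is correct and follows essentially the same route as the paper: decompose $T = T_0 + T_1$, use Lemma \ref{lem_1744} and the subsequent lemma to identify $\| T_0 \| = \| S_0 \|$ and $\| T_1 \| = \| S_1 \|$ with $S_1 = -S_0$, and conclude by the triangle inequality. No gaps.
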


\begin{proof} This follows from the fact that $T = T_0 + T_1$ together with the
facts that $\| T_0 \| = \| S_0 \|$ and $\| T_1 \| = \| S_1 \|$ while $S_1 = - S_0$.
\end{proof}

In view of Corollary \ref{cor_1057}, we are interested in bounds for the norm of the operator $S = -S_0 = S_1: L^p(\Omega_N) \rightarrow L^p(\Omega_N)$ whose non-negative kernel is
\begin{equation}  L(s,t) = q_s \cdot \sum_{k=0}^{s \wedge t-1} (1 - 2^{k-s \wedge t}) p_{s-1,k} \leq q_s \cdot \sum_{k=0}^{s \wedge t-1} p_{s-1,k}.
\label{eq_1126}
\end{equation}
From Lemma \ref{lem_605} we know that $p_{s,r} = q_r \cdot \prod_{k=r+1}^s (1 - q_k)$ for $s \leq N-1$. A little
exercise in probability shows that for any $1 \leq m \leq s \leq N$,
\begin{equation}  \sum_{k=0}^{m-1} p_{s-1,k} = \prod_{k=m}^{s-1} (1 - q_k).
\label{eq_1129} \end{equation}
Alternatively, (\ref{eq_1129}) holds true for $m = 0$ as $q_0 = 1$, and it may be proven by induction on $m$ since
$$  p_{s-1,m} + \prod_{k=m}^{s-1} (1 - q_k)  = q_m \cdot \prod_{k=m+1}^{s-1} (1 - q_k) + \prod_{k=m}^{s-1} (1 - q_k)  = \prod_{k=m+1}^s (1 - q_k). $$
From (\ref{eq_1126}) and (\ref{eq_1129}) we thus obtain

\begin{corollary} For $s,t=1,\ldots,N$, with $m = \min \{s, t \}$,
$$ 0 \leq L(s,t) \leq q_s \prod_{k=m}^{s-1} (1 - q_k), $$
where an empty product equals one.
\label{cor_209}
\end{corollary}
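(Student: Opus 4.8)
The plan is to read off both inequalities directly from the explicit formula (\ref{eq_1126}) for the kernel $L(s,t)$, using only the probabilistic identity (\ref{eq_1129}) as an additional ingredient. Fix $s,t\in\{1,\ldots,N\}$ and set $m=s\wedge t$; note $1\le m\le s\le N$, since $s,t\ge 1$ and $m=\min\{s,t\}\le s$.

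First I would establish the lower bound. By (\ref{eq_1126}) we have $L(s,t)=q_s\sum_{k=0}^{m-1}(1-2^{k-m})p_{s-1,k}$, and every factor on the right is non-negative: $q_s>0$ by construction, $p_{s-1,k}\ge 0$ since it is a probability, and $1-2^{k-m}\ge 1-2^{-1}>0$ for each $k\le m-1$. Hence $L(s,t)\ge 0$.

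Next, for the upper bound, I would use $1-2^{k-m}\le 1$ in the same formula to get $L(s,t)\le q_s\sum_{k=0}^{m-1}p_{s-1,k}$ -- this is precisely the estimate already recorded on the right of (\ref{eq_1126}) -- and then invoke (\ref{eq_1129}) with this value of $m$ to replace $\sum_{k=0}^{m-1}p_{s-1,k}$ by $\prod_{k=m}^{s-1}(1-q_k)$, the product being empty (and so equal to $1$) exactly when $m=s$. Combining the two bounds yields $0\le L(s,t)\le q_s\prod_{k=m}^{s-1}(1-q_k)$, which is the assertion.

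I do not expect any real obstacle: the corollary is a direct assembly of (\ref{eq_1126}) and (\ref{eq_1129}). The only substantive input is (\ref{eq_1129}) itself, which is already in hand; it follows from Lemma \ref{lem_605} by substituting $p_{s-1,k}=q_k\prod_{j=k+1}^{s-1}(1-q_j)$ and telescoping, or by a short induction on $m$ using the recurrence (\ref{eq_708}). Probabilistically it simply says that the walk $(X_i)$ started at level $s-1$ descends below level $m$ with probability $\prod_{k=m}^{s-1}(1-q_k)$, the product of the one-step descent probabilities $1-q_k$, so nothing delicate remains to check.
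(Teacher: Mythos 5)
Your proof is correct and is essentially the paper's own argument: the corollary is obtained exactly by combining the bound $L(s,t)\leq q_s\sum_{k=0}^{s\wedge t-1}p_{s-1,k}$ from (\ref{eq_1126}) with the identity (\ref{eq_1129}), the non-negativity being immediate from the formula for $L$. Nothing further is needed.
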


\bigskip
 {\it Some examples (parallel to the ones discussed in Section \ref{sec2}).}
\begin{enumerate}
	\item For the averaging operator, where $q_s = 1$ and $ p_{s,r} = \delta_{s,r}$, we have
	$$ L(s,t) = -\frac{1}{2} \cdot 1_{\{ s \leq t \}}, $$
i.e., this is the matrix whose entries equal $0$ below the diagonal and $-1/2$ on and above the diagonal.
	This is a rather simple matrix, and it is bounded with respect to the weighted $L_p$-norm for quite a few sequences of weights.
\item For the symmetric random walk matrix, we have $q_0 = 1$ while
for $1 \leq s \leq N$,
$$ q_s = \frac{1}{N-s+1}.
$$
Hence in view of Corollary \ref{cor_209}, with $m = \min \{s, t \}$,
$$ 0 \leq L(s,t) \leq \frac{1}{N-s+1} \prod_{k=m}^{s-1} \frac{N-k}{N-k+1} = \frac{1}{N-m+1}. $$
\item In the case where
$$ q_s = \frac{1}{N-s+1/\delta - 1} $$
for some $0 < \delta < 1$, we have
$$ 0 \leq L(s,t) \leq \frac{1}{N-s+1/\delta-1} \prod_{k=m}^{s-1} \frac{N-k+1/\delta-2}{N-k+1/\delta-1} = \frac{1}{N-m+1/\delta-1}. $$
\end{enumerate}

All that remains is to bound the $L_p(\Omega_N)$-norm of the operator whose kernel is discussed in Corollary \ref{cor_209}.
Recall from Lemma \ref{lem_1424} that we have the freedom to choose the parameters $q_1,\ldots,q_{N-1} \in (0,1)$ as we please.

\medskip How should we choose these parameters? Since $L(N, t) \leq \prod_{k=t}^{N-1} (1 - q_k)$ and we are looking for upper bounds for the norm,
the $q_s$ should not be too tiny. On the other hand, $L(s,t) \leq q_s$ for $s \leq N-1$
and hence it is beneficial to choose $q_s$ rather small. We would therefore need some balance for the $q_s$,
which is the subject of the next section.

\section{One-dimensional analysis}
\label{sec4}

Let $1 < p < \infty$.
It will be slightly more convenient to denote
$$ \cK(s,t) = L(N+1-s, N+1-t) \qquad \text{and} \qquad Q_s = q_{N+1-s}. $$
Recalling from (\ref{eq_322}) that $q_N = 1$, we see that
$$ Q_1 = 1. $$
From Corollary \ref{cor_209}
we know that for $s,t=1,\ldots,N$,
\begin{align*}
\cK(s,t)  = L(N+1-s, N+1-t) \leq q_{N+1-s} \prod_{k=N+1-\max \{s,t \}}^{N-s} (1 - q_k)
 = Q_s \prod_{k=s+1}^{\max \{s, t \}} (1 - Q_k). \end{align*}
 From Corollary \ref{cor_209} we know that $L \geq 0$. Consequently, for $s,t=1,\ldots,N$,
\begin{equation}  0 \leq \cK(s,t) \leq \left \{ \begin{array}{cc} Q_s & t \leq s \\ Q_s \cdot \prod_{k=s+1}^t (1-Q_k)  & t \geq s+1 \end{array} \right. \label{eq_1209}
\end{equation}
Recall that we are given edge weights $W_1,\ldots,W_N > 0$, and that the associated $L_p(E)$-norm
is given by (\ref{eq_346}). Denote  $$ w_s := 2^{N+1-s} \cdot W_{N+1-s} > 0. $$
Consider the
weighted $\ell_p$-norm
\begin{equation}  \| f \|_{p, w} = \left( \sum_{k=1}^N w_k |f(k)|^p \right)^{1/p} \label{eq_1307}
\end{equation}
and the operator $\cT$ whose kernel is $\cK(s,t)$.
We are allowed to choose the weights $q_1,\ldots,q_{N-1} \in (0,1)$ as we please, or equivalently,
we have the freedom to determine $Q_2,\ldots,Q_N \in (0,1)$. We must keep $Q_1 = 1$.
Based on considerations related to the {\it Muckenhoupt criterion} discussed below, we set
\begin{equation}  Q_s = \frac{w_s^{-1/(p-1)}}{\sum_{k=1}^s w_k^{-1/(p-1)}}. \label{eq_1306}
\end{equation}
It is clear that $Q_1 = 1$ and that $Q_s \in (0,1)$ for all $s \geq 2$. Recall that $q = p / (p-1)$.

\begin{lemma} In order to prove Theorem \ref{thm_1226}, it suffices
to show that the operator norm of $\cT$ with respect
to the $\| \cdot \|_{p, w}$-norm is bounded by a constant $\hat{C}_p > 0$ depending only on $p$, where in fact
\begin{equation}  \hat{C}_p \leq
2 p^{1/p} q^{1/q} \cdot \left( 1 + \max \{ (p-1)^{-1/p}, (q-1)^{-1/q}  \} \right). \label{eq_355} \end{equation}
\label{lem_350}
\end{lemma}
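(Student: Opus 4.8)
The goal of Lemma \ref{lem_350} is to unwind the chain of reductions built up in Sections \ref{sec2}--\ref{sec4}, so that bounding the genuinely one-dimensional operator $\cT$ (with the explicit kernel $\cK$ of \eqref{eq_1209}) on the weighted sequence space $(\RR^N, \|\cdot\|_{p,w})$ suffices to prove Theorem \ref{thm_1226}. The plan is to simply concatenate the inequalities already established, tracking the numerical constants. First I would recall that by Corollary \ref{cor_1057}, $\|T\|_{L^p(E)\to L^p(E)} \le 2\|S_0\|_{L^p(\Omega_N)\to L^p(\Omega_N)}$, and that $S = -S_0$ has the non-negative kernel $L(s,t)$ analyzed in Corollary \ref{cor_209}, so $\|S_0\| = \|S\|$. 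Thus $\|T\| \le 2\|S\|$.

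Next I would observe that the substitution $s \mapsto N+1-s$, $t \mapsto N+1-t$ is an isometry of $L^p(\Omega_N)$ onto $(\RR^N, \|\cdot\|_{p,w})$: indeed, by \eqref{eq_334} the norm on $L^p(\Omega_N)$ weights index $k$ by $2^k W_k$, and under the reindexing this becomes the weight $w_s = 2^{N+1-s} W_{N+1-s}$ of \eqref{eq_1307}. Under this isometry the kernel $L(s,t)$ is carried precisely to $\cK(s,t) = L(N+1-s, N+1-t)$, so the operator $S$ is unitarily equivalent to $\cT$ and $\|S\|_{L^p(\Omega_N)\to L^p(\Omega_N)} = \|\cT\|_{p,w}$. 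Combining, $\|T\|_{L^p(E)\to L^p(E)} \le 2\|\cT\|_{p,w} \le 2\hat C_p$.

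It then remains to convert this bound on $\|T\|$ into the bound on $\bar C_p$ claimed in Theorem \ref{thm_1226}. Here I would invoke the discussion from Section \ref{sec1}: the operator $T$ of \eqref{eq_1354} is induced via \eqref{eq_552} from the harmonic extension operator $\tilde T$ of \eqref{eq_1842}, which satisfies Requirements 1--4, and as noted there the operator norm of $\tilde T$ with respect to the $\dot W^{1,p}$-seminorm equals the operator norm of $T$ with respect to the $L^p(E)$-norm. Since $H(f|_{\partial V}) = \tilde T f$ by definition, the linear extension operator $H$ constructed from the random walk with transition probabilities chosen so that \eqref{eq_X} holds for the values $q_s$ coming from \eqref{eq_1306} (such a walk exists by Lemma \ref{lem_1424}) satisfies $\|H\| = \|\tilde T\| = \|T\| \le 2\hat C_p$. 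Plugging in the bound \eqref{eq_355} for $\hat C_p$ gives $\bar C_p \le 2\hat C_p \le 4 p^{1/p} q^{1/q}(1 + \max\{(p-1)^{-1/p}, (q-1)^{-1/q}\})$, which is exactly the first inequality in \eqref{eq_1003}; the second inequality in \eqref{eq_1003} is an elementary estimate on this elementary function of $p$ (using $p^{1/p}, q^{1/q} \le e^{1/e} \le 2$ and absorbing constants). One should also note for Requirement 2 that the choice \eqref{eq_1306} of $Q_s$ corresponds, after reindexing, exactly to the $q_s$ of \eqref{eq_338}, so the operator $H$ produced is the one described in the introduction.

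There is no real obstacle here — this lemma is pure bookkeeping, and the only thing to be careful about is the direction of the reindexing $s \leftrightarrow N+1-s$ and the matching of the two weight conventions \eqref{eq_334} and \eqref{eq_1307}, together with verifying that $\cK$ is really the reindexed $L$ and not its transpose (which would still have the same norm, but one wants the identification clean). The substantive work — actually bounding $\|\cT\|_{p,w}$ by $\hat C_p$ via the Muckenhoupt criterion — is deferred to the remainder of Section \ref{sec4}.
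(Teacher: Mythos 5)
Your argument is correct and matches the paper's own proof: invoke Corollary \ref{cor_1057}, note that the reindexing $s \mapsto N+1-s$ is an isometry carrying the kernel $L$ and the norm (\ref{eq_334}) to $\cK$ and $\| \cdot \|_{p,w}$, and conclude $\bar{C}_p \leq 2 \hat{C}_p$. The extra bookkeeping you include (identifying $Q_s$ with the $q_s$ of (\ref{eq_338}), citing Lemma \ref{lem_1424}, and passing from $T$ to $\tilde{T}$ and $H$) is consistent with the surrounding text and does not change the substance.
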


\begin{proof} In view of Corollary \ref{cor_1057}, it suffices to
bound the operator norm of $-S_0$, whose kernel is $L$, with respect to  the $L^p(\Omega_N)$-norm defined  in (\ref{eq_334}).
Under the transformation $$ s \mapsto N+1-s $$ the operator $-S_0$ whose kernel is $L$ transforms to the operator
$\cT$ whose kernel is $\cK$. The $L^p(\Omega_N)$-norm from (\ref{eq_334}) transforms to the
$\| \cdot \|_{p,w}$-norm defined in (\ref{eq_1307}). Hence Theorem \ref{thm_1226} would follow
once we obtain the bound (\ref{eq_355}), where $\bar{C}_p \leq 2 \hat{C}_p$ by Corollary \ref{cor_1057}.
\end{proof}

The remainder of this section is devoted to the proof of the following:

\begin{proposition} The operator norm of $\cT$ with respect to the norm (\ref{eq_1307}) is bounded
by a number $\hat{C}_p$ depending only on $p \in (1,\infty)$. In fact, we have the bound (\ref{eq_355})
for the constant $\hat{C}_p$.
\label{prop_1152}	
\end{proposition}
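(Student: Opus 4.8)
The plan is to reduce Proposition \ref{prop_1152} to a one-dimensional Hardy-type inequality and then apply the Muckenhoupt criterion. The kernel bound (\ref{eq_1209}) splits $\cT$ into a lower-triangular part (where $t \leq s$, with $\cK(s,t) \leq Q_s$) and a strictly-upper-triangular part (where $t \geq s+1$, with $\cK(s,t) \leq Q_s \prod_{k=s+1}^t (1-Q_k)$). By the triangle inequality it suffices to bound each part separately, and to do so it suffices to bound the operator with the \emph{majorizing} non-negative kernel obtained by replacing $\cK$ with its upper bound in (\ref{eq_1209}). So the real task is to bound, in the weighted $\ell_p$-norm (\ref{eq_1307}), the two operators
\[
 A f(s) = Q_s \sum_{t \leq s} f(t), \qquad B f(s) = Q_s \sum_{t \geq s+1} \Bigl( \prod_{k=s+1}^t (1-Q_k) \Bigr) f(t),
\]
assuming $f \geq 0$.

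First I would handle $A$. Writing out $\prod_{k=s+1}^t(1-Q_k)$ using the definition (\ref{eq_1306}): since $1 - Q_k = \bigl(\sum_{j=1}^{k-1} w_j^{-1/(p-1)}\bigr)/\bigl(\sum_{j=1}^{k} w_j^{-1/(p-1)}\bigr)$, the product telescopes, giving $\prod_{k=s+1}^t (1-Q_k) = \bigl(\sum_{j=1}^{s} w_j^{-1/(p-1)}\bigr)/\bigl(\sum_{j=1}^{t} w_j^{-1/(p-1)}\bigr)$, and likewise $Q_s = w_s^{-1/(p-1)}/\sum_{j=1}^s w_j^{-1/(p-1)}$. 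Setting $V_s = \sum_{j=1}^s w_j^{-1/(p-1)}$ (so $w_s^{-1/(p-1)} = V_s - V_{s-1}$), the kernel of $A$ becomes $(V_s - V_{s-1})/V_s$ times the sum over $t \leq s$, and the kernel of $B$ becomes $(V_s - V_{s-1})/V_t$ times the sum over $t > s$. These are exactly discrete Hardy operators in disguise. The classical discrete Muckenhoupt/Hardy criterion (see \cite{M}) states that the operator $f \mapsto (\sum_{t \leq s} f(t))$ (resp. its dual) is bounded $\ell_p(w) \to \ell_p(u)$ with a constant controlled by $p^{1/p} q^{1/q}$ times the supremum of the relevant Muckenhoupt quantity $\bigl(\sum_{j \leq s} u_j\bigr)^{1/p}\bigl(\sum_{j \geq s} w_j^{1-q}\bigr)^{1/q}$, and the point of the choice (\ref{eq_1306}) is that it makes this quantity uniformly bounded. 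So the computational heart of the argument is to plug in $u_s = w_s Q_s^p$ for $A$ (and the analogous weight for $B$), verify that the Muckenhoupt $A_p$-type supremum is bounded by an explicit constant like $1 + (p-1)^{-1/p}$ or its conjugate, and assemble the pieces.

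For $B$, which is essentially the adjoint-type Hardy operator, I would either repeat the Muckenhoupt computation with the roles of the summation direction reversed, or — more cleanly — observe that $B$ is, up to the weight bookkeeping, the adjoint of an operator of the same shape as $A$, so its norm is controlled by the conjugate Muckenhoupt constant, explaining the $\max$ over the two exponents in (\ref{eq_355}). Summing the two contributions and tracking constants: each Hardy operator contributes $p^{1/p}q^{1/q}$ times a Muckenhoupt factor of the form $\bigl(1 + \max\{(p-1)^{-1/p},(q-1)^{-1/q}\}\bigr)$ after optimizing, and the factor of $2$ in (\ref{eq_355}) absorbs the two triangular pieces; combined with Corollary \ref{cor_1057} this gives the $\bar C_p \leq 2 \hat C_p$ of (\ref{eq_1003}).

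The main obstacle I anticipate is not conceptual but bookkeeping: correctly identifying the weight $u$ against which each Hardy operator must be bounded so that the output norm is the \emph{same} $\|\cdot\|_{p,w}$ we started with, and then checking that the Muckenhoupt supremum with the specific choice (\ref{eq_1306}) collapses to an explicit $p$-dependent constant rather than something depending on $N$ or on the $w_k$. This is where the telescoping of $\prod(1-Q_k)$ and the fact that $\sum_{j \geq s} w_j^{1-q}$ is a partial sum of the same series $V$ must conspire; I would expect that after substituting $V_s$ for the partial sums, the Muckenhoupt quantity for $A$ literally simplifies to something like $V_s^{1/p} \cdot (\text{tail of } V)^{1/q}/V_s$ which is $\leq 1$ by monotonicity, with the constant $1 + (p-1)^{-1/p}$ coming from the precise form of the discrete (as opposed to continuous) Hardy inequality. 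Getting the constant in (\ref{eq_355}) exactly, rather than merely up to a universal factor, is the delicate part, and I would lean on the sharp discrete Hardy inequality with its known best constant $p^{1/p} q^{1/q}$ to do it.
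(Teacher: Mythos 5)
Your proposal follows essentially the same route as the paper: majorize $\cT$ by the right-hand side of (\ref{eq_1209}), split into the triangular Hardy-type piece $\cT_0$ with kernel $Q_s$ and the upper piece $\cT_1$ whose kernel telescopes to $\alpha_s/\sum_{j\le t}\alpha_j$ (the paper's identity (\ref{eq_1811})), and then verify the discrete Muckenhoupt condition in each summation direction for the choice (\ref{eq_1306}). The one point where your sketch is optimistic is the claim that the Muckenhoupt supremum collapses ``by monotonicity'': the paper verifies it via the telescoping estimate of Lemma \ref{lem_1609_} for $\sum_{k\ge r}\alpha_k\bigl(\sum_{\ell\le k}\alpha_\ell\bigr)^{-p}$ (and its analogue with $q$ in place of $p$ for $\cT_1$), which produces the factors $2^{1/p}\max\{1,(p-1)^{-1/p}\}$ and $2^{1/q}\max\{1,(q-1)^{-1/q}\}$ rather than a bound by $1$ --- but this is precisely the computation you correctly flag as the remaining bookkeeping.
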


Our main tool in the proof of Proposition \ref{prop_1152} is the Muckenhoupt criterion \cite{M}, which is an indispensable tool
for proving one-dimensional inequalities of Poincar\'e-Sobolev type.
For the reader's convenience, we include here a statement and a proof of a straightforward modification of the  Muckenhoupt criterion, with sums in place of integrals:

\begin{theorem}{(Muckenhoupt)} Let $1 < p < \infty$ and write $\Omega_N = \{1,\ldots, N\}$. Let $U, V: \Omega_N \rightarrow (0, \infty)$
and let $A > 0$ be such that for all $r =1,\ldots,N$,
 \begin{equation}  \left( \sum_{k=r}^N |U(k)|^p \right)^{1/p} \leq A \left( \sum_{k=1}^{r} |V(k)|^{-q} \right)^{-1/q}. \label{eq_1325} \end{equation}
 Then for any function $f: \Omega_N \rightarrow \RR$,
 \begin{equation}  \left( \sum_{k=1}^{N} \left| U(k) \sum_{\ell=1}^k f(\ell) \right|^p  \right)^{1/p} \leq C_p A \left( \sum_{k=1}^{N} |V(k) f(k)|^p \right)^{1/p}, \label{eq_1326} \end{equation}
 with $C_p = p^{1/p} q^{1/q}$.
 \label{thm_1459}
\end{theorem}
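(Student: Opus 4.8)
The statement to prove is the Muckenhoupt criterion (Theorem~\ref{thm_1459}): an explicit one-dimensional weighted inequality for the Hardy-type operator $f \mapsto \big(k \mapsto U(k)\sum_{\ell=1}^k f(\ell)\big)$, with constant $C_p = p^{1/p}q^{1/q}$. The approach is the classical Muckenhoupt argument, carried out with sums instead of integrals. The idea is to insert a well-chosen ``gauge'' sequence into the partial sum via H\"older's inequality, so that the $p$-th power of the left-hand side gets bounded by a double sum, whose inner sum telescopes against the test function $\sum_{k} |V(k)f(k)|^p$, while the remaining factors are controlled precisely by the hypothesis (\ref{eq_1325}).

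\medskip\noindent\textbf{Key steps.} First I would reduce to the case $f \geq 0$ (replace $f$ by $|f|$; this only increases the left-hand side). Next, introduce the quantity $h(k) = \sum_{j=1}^k |V(j)|^{-q}$, which is increasing in $k$, and write $\Delta h(j) = h(j) - h(j-1) = |V(j)|^{-q}$ (with $h(0) = 0$). The heart of the matter is the pointwise estimate: for each $k$,
\begin{equation*}
\sum_{\ell=1}^k f(\ell) = \sum_{\ell=1}^k \Big( V(\ell) f(\ell) \cdot (\Delta h(\ell))^{-1/p} \Big) \cdot \Big( |V(\ell)|^{-q} \Big)^{1/p} \cdot \big(\text{compensating powers}\big),
\end{equation*}
to which one applies H\"older with exponents $p$ and $q$ to pull out $\big(\sum_\ell |V(\ell)f(\ell)|^p (\Delta h(\ell))^{-p/q'}\big)$ against $\big(\sum_{\ell\le k}\Delta h(\ell)\big)^{p/q}$-type factors; the precise choice of exponents is dictated by wanting the leftover to telescope. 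The cleanest route is the standard one: using that $h$ is increasing, one shows
\begin{equation*}
\Big(\sum_{\ell=1}^k f(\ell)\Big)^p \le q^{p/q}\, h(k)^{p-1} \sum_{\ell=1}^k |V(\ell) f(\ell)|^p\, h(\ell)^{-1}\, \big(\text{or a telescoping variant}\big),
\end{equation*}
then multiplies by $|U(k)|^p$, sums over $k$, swaps the order of summation so the $k$-sum $\sum_{k\ge\ell}|U(k)|^p h(k)^{?}$ is isolated, and bounds that inner sum using (\ref{eq_1325}) together with the elementary comparison $\sum_{k\ge\ell} h(k)^{-1-\varepsilon}\lesssim h(\ell)^{-\varepsilon}/\varepsilon$ for the relevant $\varepsilon$ related to $p$. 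Collecting the constants from the two H\"older applications and the telescoping/summation-by-parts step yields exactly $C_p = p^{1/p} q^{1/q}$; I would track these carefully since the sharp constant is asserted.

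\medskip\noindent\textbf{Main obstacle.} The real work is bookkeeping rather than conceptual: getting the discrete analogue of the Muckenhoupt integral identity to produce the \emph{sharp} constant $p^{1/p}q^{1/q}$ rather than some larger $C_p$. In the integral version one uses that $\frac{d}{dt}h(t)^{1/q}$ relates cleanly to $h'(t)$; in the sum version the corresponding step is a summation-by-parts / mean-value estimate of the form $h(\ell)^{1/q} - h(\ell-1)^{1/q} \le \frac{1}{q} h(\ell)^{1/q - 1}\Delta h(\ell)$ (by concavity of $t\mapsto t^{1/q}$), and one must verify that the accumulated constants from (i) this concavity bound, (ii) the H\"older step, and (iii) the final order-swap and geometric-type summation multiply to exactly $p^{1/p}q^{1/q}$. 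I would handle the order-swapped inner sum $\sum_{k=\ell}^N |U(k)|^p h(k)^{-1/q}$ by summation by parts against the hypothesis $\sum_{k=r}^N |U(k)|^p \le A^p h(r)^{-p/q}$, which is precisely why the hypothesis is stated in ``tail'' form; this is the step where the constant is pinned down, and it is the one I expect to require the most care.
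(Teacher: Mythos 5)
Your proposal follows essentially the same route as the paper's proof: a gauge-weighted H\"older step with $h$ built from partial sums of $|V|^{-q}$, an order-of-summation swap, the tail hypothesis (\ref{eq_1325}), and a telescoping increment estimate (the paper's Lemma \ref{lem_1539}) applied twice --- once in the $V$-variable and once, with the roles of $p$ and $q$ interchanged, to the inner sum $\sum_{k\geq \ell}|U(k)|^p(\cdot)$. One small correction to your sketch: concavity of $t \mapsto t^{1/q}$ gives $h(\ell)^{1/q} - h(\ell-1)^{1/q} \geq \frac{1}{q}\, h(\ell)^{1/q-1}\bigl(h(\ell)-h(\ell-1)\bigr)$, i.e.\ the reverse of the inequality you wrote, and this is precisely the direction the telescoping step requires.
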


By continuity, the analog of Theorem \ref{thm_1459} for $p = 1,\infty$ holds true with $C_1 = C_{\infty} = 1$.
We remark that as in \cite{M}, this  criterion  is tight, in the sense that the infimum over all $A > 0$ satisfying (\ref{eq_1325}) is equivalent to the best constant
in inequality (\ref{eq_1326}). For the proof of Theorem \ref{thm_1459} we require the following:

\begin{lemma} For  $1 < p < \infty, q = p / (p-1), \alpha_1,\ldots,\alpha_N > 0$ and $r=1,\ldots,N$,
	\begin{equation}  \sum_{k=1}^r \alpha_k \left( \sum_{\ell=1}^k
	\alpha_{\ell} \right)^{-1/p} \leq q \cdot \left( \sum_{k=1}^{r} \alpha_k \right)^{1/q}.
	\label{eq_1505} \end{equation}
	\label{lem_1539}
\end{lemma}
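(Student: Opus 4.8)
The inequality to prove is a discrete, weighted analogue of the elementary estimate $\int_0^x t^{-1/p}\,dt = q\,x^{1/q}$; the left-hand side is a Riemann-type sum for $\int_0^{A_r} s^{-1/p}\,ds$ where $A_r = \sum_{k=1}^r \alpha_k$, and the right-hand side is $q\,A_r^{1/q}$. So the natural strategy is to compare the $k$-th summand $\alpha_k\bigl(\sum_{\ell\le k}\alpha_\ell\bigr)^{-1/p}$ with the increment of the antiderivative, i.e. with $q\bigl(A_k^{1/q} - A_{k-1}^{1/q}\bigr)$, and then telescope. Concretely, set $A_0 = 0$ and $A_k = \sum_{\ell=1}^k \alpha_\ell$ for $k \ge 1$, so $\alpha_k = A_k - A_{k-1}$. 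It suffices to prove the pointwise bound
\begin{equation*}
(A_k - A_{k-1}) \cdot A_k^{-1/p} \le q\left( A_k^{1/q} - A_{k-1}^{1/q} \right) \qquad (k = 1,\ldots,r),
\end{equation*}
because summing this over $k = 1,\ldots,r$ gives a telescoping sum on the right equal to $q\,(A_r^{1/q} - A_0^{1/q}) = q\,A_r^{1/q}$, which is exactly the claimed bound (\ref{eq_1505}).

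The pointwise inequality is a one-variable convexity statement. First I would dispose of the case $A_{k-1} = 0$ (then it reads $A_k \cdot A_k^{-1/p} = A_k^{1/q} \le q\,A_k^{1/q}$, true since $q \ge 1$). For $A_{k-1} > 0$, divide through by $A_k^{1/q}$ and write $u = A_{k-1}/A_k \in [0,1)$; the inequality becomes $1 - u \le q(1 - u^{1/q})$, equivalently $u^{1/q} \le 1 - (1-u)/q = 1 - p^{-1}(1-u)\cdot(p/q)\cdot\ldots$ — cleaner: it is $q(1-u^{1/q}) \ge 1-u$. Let $\phi(u) = q(1 - u^{1/q}) - (1-u)$ on $[0,1]$. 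Then $\phi(1) = 0$ and $\phi'(u) = -u^{1/q - 1} + 1 = 1 - u^{-1/p}$, which is $\le 0$ for $u \in (0,1]$ since $u \le 1$ and the exponent $-1/p$ is negative. Hence $\phi$ is nonincreasing on $(0,1]$, so $\phi(u) \ge \phi(1) = 0$ for all $u \in [0,1]$, which is precisely the pointwise bound. (Equivalently, this is the statement that the concave function $t \mapsto t^{1/q}$ lies above its chords, applied on $[A_{k-1}, A_k]$, together with the derivative bound at the left endpoint — either packaging works.)

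I do not expect a serious obstacle here; the only mild subtlety is the bookkeeping of the endpoint case $A_{k-1}=0$ (i.e. $\alpha_1$ being the first term) and making sure the monotonicity direction of $\phi'$ is stated correctly, which hinges only on $1/q - 1 = -1/p < 0$. The telescoping step is then immediate. One could alternatively prove the pointwise inequality by the mean value theorem: $A_k^{1/q} - A_{k-1}^{1/q} = \frac1q\,\xi^{1/q-1}(A_k - A_{k-1})$ for some $\xi \in (A_{k-1}, A_k)$, and $\xi^{1/q-1} = \xi^{-1/p} \ge A_k^{-1/p}$ since $\xi < A_k$ — this gives the claim directly and is perhaps the shortest route. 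I would present the $\phi$-monotonicity argument or this MVT argument, whichever the authors prefer for self-containedness.
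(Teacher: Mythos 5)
Your proposal is correct and follows essentially the same route as the paper: the pointwise estimate $(A_k-A_{k-1})A_k^{-1/p}\le q\bigl(A_k^{1/q}-A_{k-1}^{1/q}\bigr)$ (your MVT/concavity step, noting $1/q-1=-1/p$) is exactly the paper's inequality (\ref{eq_1429_}), and the telescoping summation over $k=1,\ldots,r$ is identical. No gaps.
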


\begin{proof}
	We will use the simple  inequality
	\begin{equation} (a+b)^{1/q} - a^{1/q} \geq b \cdot \min_{\xi \in (a, a+b)} \frac{\xi^{1/q-1}}{q} = \frac{1}{q} (a+b)^{1/q-1} \cdot b, \label{eq_1429_} \end{equation}
	valid for any $a, b \geq 0$ with $a+b > 0$.  From (\ref{eq_1429_}), for  $k=1,\ldots,N$,
	$$ \left(\sum_{\ell=1}^{k} \alpha_{\ell} \right)^{1/q}  - \left(\sum_{\ell=1}^{k-1} \alpha_{\ell} \right)^{1/q} \geq \frac{1}{q}
	\cdot \alpha_k \cdot \left(\sum_{\ell=1}^k \alpha_{\ell} \right)^{1/q-1}, $$
	where an empty sum equals zero. By summing this  for $k=1,\ldots, r$ we obtain (\ref{eq_1505}). \end{proof}

\begin{proof}[Proof of Theorem \ref{thm_1459} (Muckenhoupt)] By the H\"older inequality, for any function $f: \Omega_N \rightarrow \RR$ and  weights $h: \Omega_N \rightarrow (0, \infty)$,
	\begin{align} \nonumber  \sum_{k=1}^{N} \left| U(k) \sum_{\ell=1}^k f(\ell) \right|^p   & \leq \sum_{k=1}^N U^p(k) \cdot \sum_{\ell=1}^k |f(\ell) V(\ell) h(\ell)|^p \cdot \left( \sum_{j=1}^k |V(j) h(j)|^{-q} \right)^{p/q} \\
	& = \sum_{\ell=1}^N |f(\ell) V(\ell) h(\ell)|^p \sum_{k=\ell}^N U^p(k) \left( \sum_{j=1}^k |V(j) h(j)|^{-q} \right)^{p/q}. \label{eq_1447}
	\end{align}	
Set $h(k) = \left( \sum_{\ell=1}^k V(\ell)^{-q} \right)^{1/(pq)}$. By applying
Lemma \ref{lem_1539} with $\alpha_k = V(k)^{-q}$ we obtain
$$ \sum_{k=1}^r |V(k) h(k)|^{-q} = \sum_{k=1}^r \alpha_k \left(\sum_{\ell=1}^k \alpha_\ell \right)^{-1/p} \leq q \cdot \left( \sum_{k=1}^{r} \alpha_k \right)^{1/q}  =
q \cdot \left( \sum_{k=1}^{r} V(k)^{-q}  \right)^{1/q}.
$$
Hence for any $f: \Omega_N \rightarrow \RR$, the expression in (\ref{eq_1447}) is at most
\begin{equation}
q^{p/q} \cdot \sum_{\ell=1}^N |f(\ell) V(\ell) h(\ell)|^p \sum_{k=\ell}^N U^p(k) \left( \sum_{j=1}^k |V(j)|^{-q} \right)^{p/q^2}.
\label{eq_513} \end{equation}
By applying (\ref{eq_1325}) and then Lemma \ref{lem_1539} with $\alpha_k = U^p(N+1-k)$ and with $p \in (1, \infty)$ playing the r\^ole of $q \in (1, \infty)$, we see that
\begin{align*}
\sum_{k=\ell}^N & |U(k)|^p  \left( \sum_{j=1}^k |V(j)|^{-q} \right)^{p/q^2}
\leq A^{p/q} \sum_{k=\ell}^N |U(k)|^p \left( \sum_{j=k}^N |U(j)|^{p} \right)^{-1/q}  \\ & = A^{p/q} \sum_{k=1}^{N+1-\ell} \alpha_k \left( \sum_{j=1}^k \alpha_j \right)^{-1/q} \leq p \cdot A^{p/q} \left( \sum_{k=1}^{N+1-\ell} \alpha_k \right)^{1/p} =
p \cdot A^{p/q} \left( \sum_{k=\ell}^{N} U^p(k) \right)^{1/p}.
\end{align*}
Hence the expression in (\ref{eq_513}) is at most
\begin{align*}
p \cdot (q A)^{p/q} \cdot \sum_{\ell=1}^N |f(\ell) V(\ell) h(\ell)|^p \cdot \left( \sum_{k=\ell}^N U^p(k) \right)^{1/p}.
\end{align*}
Applying (\ref{eq_1325}) again we bound the last expression from above by
$$ p \cdot (q A)^{p/q} \cdot A \cdot \sum_{\ell=1}^N |f(\ell) V(\ell) h(\ell)|^p \cdot \left( \sum_{k=1}^{\ell} V^{-q}(k) \right)^{-1/q}
= p q^{p/q} A^p\sum_{\ell=1}^N |f(\ell) V(\ell) |^p, $$
completing the proof.
\end{proof}

\begin{corollary} Let $1 < p < \infty$ and $\Omega_N = \{1,\ldots, N\}$. Let $U, V: \Omega_N \rightarrow (0, \infty)$ and let $A > 0$ be such that for all $r =1,\ldots,N$,
	\begin{equation}  \left( \sum_{k=1}^r |U(k)|^p \right)^{1/p} \leq A \left( \sum_{k=r}^{N} |V(k)|^{-q} \right)^{-1/q}. \label{eq_1325_} \end{equation}
	Then for any $f: \Omega_N \rightarrow \RR$,
	\begin{equation}  \left( \sum_{k=1}^{N} \left| U(k) \sum_{\ell=k}^N f(\ell) \right|^p  \right)^{1/p} \leq C_p A \left( \sum_{k=1}^{N} |V(k) f(k)|^p \right)^{1/p}, \label{eq_1326_} \end{equation}
	with $C_p = p^{1/p} q^{1/q}$.
	\label{cor_1459_}
\end{corollary}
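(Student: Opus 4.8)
The plan is to reduce this "dual" statement to Theorem \ref{thm_1459} (Muckenhoupt) by the order-reversing change of variables $s \mapsto N+1-s$, which is exactly the same device already used in Lemma \ref{lem_350} to pass between $S_0$ and $\cT$. First I would introduce the reflected data: set $\tilde{U}(k) = U(N+1-k)$, $\tilde{V}(k) = V(N+1-k)$, and, given $f: \Omega_N \to \RR$, set $\tilde{f}(k) = f(N+1-k)$. The inner sum transforms as $\sum_{\ell=k}^N f(\ell) = \sum_{\ell=1}^{N+1-k} \tilde{f}(\ell)$, and under $k \mapsto N+1-k$ the quantity $U(k) \sum_{\ell=k}^N f(\ell)$ becomes $\tilde{U}(N+1-k)\sum_{\ell=1}^{k}\tilde f(\ell)$ after relabeling; so the left-hand side of (\ref{eq_1326_}) equals the left-hand side of (\ref{eq_1326}) with $U,V,f$ replaced by $\tilde U,\tilde V,\tilde f$, and likewise the right-hand side of (\ref{eq_1326_}) matches that of (\ref{eq_1326}).

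Next I would check that hypothesis (\ref{eq_1325_}) for $(U,V)$ is precisely hypothesis (\ref{eq_1325}) for $(\tilde U,\tilde V)$. Indeed, for each $r$ the sum $\sum_{k=1}^r |U(k)|^p$ equals $\sum_{k=N+1-r}^N |\tilde U(k)|^p$, and $\sum_{k=r}^N |V(k)|^{-q}$ equals $\sum_{k=1}^{N+1-r} |\tilde V(k)|^{-q}$; writing $r' = N+1-r$, condition (\ref{eq_1325_}) at $r$ reads $\bigl(\sum_{k=r'}^N |\tilde U(k)|^p\bigr)^{1/p} \le A \bigl(\sum_{k=1}^{r'} |\tilde V(k)|^{-q}\bigr)^{-1/q}$, which is (\ref{eq_1325}) for $(\tilde U, \tilde V)$ at $r'$; as $r$ ranges over $\{1,\dots,N\}$ so does $r'$, so the hypotheses are equivalent.

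Having established both equivalences, I would simply invoke Theorem \ref{thm_1459} applied to $(\tilde U,\tilde V,\tilde f)$ with the same constant $A$, obtaining inequality (\ref{eq_1326}) for the reflected data with constant $C_p A = p^{1/p} q^{1/q} A$, and then translate back via the change of variables to get (\ref{eq_1326_}). There is essentially no obstacle here: the only thing to be careful about is getting the index bookkeeping in the reflection exactly right (partial sums of the form $\sum_{\ell=k}^N$ versus $\sum_{\ell=1}^k$, and the direction of the one-sided conditions (\ref{eq_1325}) versus (\ref{eq_1325_})), since a sign error in the reflection would pair the wrong tail of $U$ with the wrong tail of $V$. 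Once the dictionary is written down correctly, the corollary is immediate.
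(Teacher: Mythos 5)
Your proposal is correct and is essentially identical to the paper's own proof: reflect the data via $k \mapsto N+1-k$, observe that hypothesis (\ref{eq_1325_}) for $(U,V)$ becomes hypothesis (\ref{eq_1325}) for $(\tilde U,\tilde V)$, apply Theorem \ref{thm_1459}, and undo the reflection. Your intermediate expression $\tilde{U}(N+1-k)\sum_{\ell=1}^{k}\tilde f(\ell)$ is labeled slightly loosely (the two factors are written in different index conventions), but since the identification is only needed after summing over all $k$, where $k\mapsto N+1-k$ is a bijection of $\Omega_N$, this does not affect the argument.
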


\begin{proof} Denote $\tilde{U}(k) = U(N+1-k)$ and $\tilde{V}(k) = V(N+1-k)$. Then from (\ref{eq_1325_}),
	for all $r =1,\ldots,N$,
	$$ \left( \sum_{k=r}^N |\tilde{U}(k)|^p \right)^{1/p} \leq A \left( \sum_{k=1}^{r} |\tilde{V}(k)|^{-q} \right)^{-1/q}. $$
By Theorem \ref{thm_1459}, this implies that for any $g: \Omega_N \rightarrow \RR$, denoting
$f(r) = g(N+1-r)$,
$$ \left( \sum_{k=1}^{N} \left| \tilde{U}(k) \sum_{\ell=1}^k f(N+1-\ell) \right|^p  \right)^{1/p} \leq C_p A \left( \sum_{k=1}^{N} |\tilde{V}(k) f(N+1-k)|^p \right)^{1/p},  $$
or equivalently,
$$ \left( \sum_{k=1}^{N} \left| \tilde{U}(N+1-k) \sum_{\ell=k}^N f(\ell) \right|^p  \right)^{1/p} \leq C_p A \left( \sum_{k=1}^{N} |\tilde{V}(N+1-k) f(k)|^p \right)^{1/p}. $$
	This implies (\ref{eq_1326_}). \end{proof}

\begin{proposition} For $f: \Omega_n \rightarrow \RR$ and $s=1,\ldots,N$ denote
	$$ \cT_0 f(s) = Q_s \sum_{t=1}^s f(t), $$
where $Q_s$ is defined in (\ref{eq_1306}) above.
Then the operator norm of $\cT_0$ with respect to the norm $\| \cdot \|_{p,w}$ defined in (\ref{eq_1307}) is bounded
by a number $ \tilde{C}_p$ depending only on $p \in (1,\infty)$. In fact, $\tilde{C}_p \leq 2^{1/p} p^{1/p} q^{1/q} \cdot \max \{ 1, (p-1)^{-1/p} \}$. \label{prop_1622}	
\end{proposition}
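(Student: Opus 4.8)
The tool is the Muckenhoupt criterion, Theorem~\ref{thm_1459}, applied with weights chosen so that inequality (\ref{eq_1326}) becomes \emph{verbatim} the desired bound $\|\cT_0 f\|_{p,w}\le \tilde{C}_p\|f\|_{p,w}$. Concretely, the plan is to put
$$ U(s) = w_s^{1/p}\, Q_s, \qquad V(s) = w_s^{1/p} \qquad (s=1,\ldots,N), $$
so that $U(s)\sum_{\ell=1}^{s}f(\ell) = w_s^{1/p}\,\cT_0 f(s)$ and $V(s)f(s)=w_s^{1/p}f(s)$. Then the left-hand side of (\ref{eq_1326}) is precisely $\|\cT_0 f\|_{p,w}$ and the right-hand side is $C_p A\,\|f\|_{p,w}$, with $C_p=p^{1/p}q^{1/q}$. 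Hence it suffices to verify the Muckenhoupt hypothesis (\ref{eq_1325}) for this pair $U,V$ with a constant $A$ depending only on $p$; no rescaling of $f$ is needed.

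To carry this out I would introduce the increasing sequence $\sigma_s := \sum_{k=1}^{s} w_k^{-1/(p-1)}$ (with $\sigma_0=0$), which is exactly the denominator appearing in (\ref{eq_1306}); thus $Q_s = (\sigma_s-\sigma_{s-1})/\sigma_s$ and $w_s = (\sigma_s-\sigma_{s-1})^{1-p}$. Substituting these two identities collapses $U$ to the clean form
$$ U(s)^p = w_s Q_s^p = \frac{\sigma_s - \sigma_{s-1}}{\sigma_s^{p}}. $$
On the other side, since $q/p = 1/(p-1)$ one has $V(k)^{-q} = w_k^{-1/(p-1)}$, so $\sum_{k=1}^{r}V(k)^{-q} = \sigma_r$ and the right-hand side of (\ref{eq_1325}) equals $A\,\sigma_r^{-1/q}$.

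The crux is the estimate $\sum_{s=r}^{N}U(s)^p \le q\,\sigma_r^{1-p}$, valid for every $r=1,\ldots,N$. I would prove it by splitting off the term $s=r$, bounding it by $(\sigma_r-\sigma_{r-1})/\sigma_r^{p}\le \sigma_r/\sigma_r^{p}=\sigma_r^{1-p}$, and handling the tail by comparison with an integral: for $s\ge r+1$, monotonicity of $x\mapsto x^{-p}$ gives $(\sigma_s-\sigma_{s-1})/\sigma_s^{p}\le \int_{\sigma_{s-1}}^{\sigma_s}x^{-p}\,dx$, so the sum over $s=r+1,\ldots,N$ telescopes to at most $\int_{\sigma_r}^{\infty}x^{-p}\,dx = \sigma_r^{1-p}/(p-1)$; adding the two contributions and using $1+1/(p-1)=q$ gives the claim. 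Since $(1-p)/p=-1/q$, taking $p$-th roots shows that (\ref{eq_1325}) holds with $A = q^{1/p}$. This is precisely the step where the particular choice (\ref{eq_1306}) of $Q_s$ is used — it is the ``balance'' promised at the end of Section~\ref{sec3} — and it is the main point; everything else is bookkeeping.

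Finally, Theorem~\ref{thm_1459} yields $\|\cT_0 f\|_{p,w}\le C_p A\,\|f\|_{p,w} = p^{1/p}q^{1/q}\cdot q^{1/p}\,\|f\|_{p,w}$, so it only remains to bound the factor $q^{1/p}$ by $2^{1/p}\max\{1,(p-1)^{-1/p}\}$. When $p\ge 2$ we have $q=p/(p-1)\le 2$ and $(p-1)^{-1/p}\le 1$, so $q^{1/p}\le 2^{1/p}=2^{1/p}\max\{1,(p-1)^{-1/p}\}$; when $1<p<2$ we have $q^{1/p}=p^{1/p}(p-1)^{-1/p}\le 2^{1/p}(p-1)^{-1/p}=2^{1/p}\max\{1,(p-1)^{-1/p}\}$. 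Combining, $\tilde{C}_p\le 2^{1/p}p^{1/p}q^{1/q}\max\{1,(p-1)^{-1/p}\}$, which is the asserted bound.
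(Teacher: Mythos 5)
Your proposal is correct and follows essentially the paper's own route: the same choice $U(k)=w_k^{1/p}Q_k$, $V(k)=w_k^{1/p}$, and the same application of the Muckenhoupt criterion (Theorem \ref{thm_1459}) to conclude. The only deviation is in verifying the hypothesis (\ref{eq_1325}): the paper invokes the discrete elementary inequality of Lemma \ref{lem_1609_} with $\alpha_k=w_k^{-1/(p-1)}$, giving $A=2^{1/p}\max\{1,(p-1)^{-1/p}\}$, whereas you use an integral comparison with $x^{-p}$ plus the trivial bound on the $s=r$ term, which in fact yields the slightly sharper constant $A=q^{1/p}$ that you then correctly relax to match the stated bound.
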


The proof of Proposition \ref{prop_1622} requires the following:

\begin{lemma} For $1 < p < \infty, \alpha_1,\ldots,\alpha_N > 0$ and $r=1,\ldots,N$, 	
	\begin{equation}  \sum_{k=r}^N \alpha_k \left( \sum_{\ell=1}^k
	\alpha_{\ell} \right)^{-p} \leq \frac{2}{\min \{1, p-1\}} \cdot \left( \sum_{k=1}^{r} \alpha_k \right)^{-p+1}.
	\label{eq_1505_} \end{equation}
	\label{lem_1609_}
\end{lemma}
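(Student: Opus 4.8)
The claim is a weighted Hardy-type inequality in the "wrong direction" (summing $k$ from $r$ to $N$, but with the full partial sum $\sum_{\ell=1}^k\alpha_\ell$ in the denominator), and the natural approach is a telescoping argument analogous to the one used for Lemma \ref{lem_1539}, but now exploiting convexity/concavity of $x\mapsto x^{-p+1}$ rather than of $x\mapsto x^{1/q}$. Write $S_k=\sum_{\ell=1}^k\alpha_\ell$, so $\alpha_k=S_k-S_{k-1}$ and $S_0=0$. I would like to bound $\alpha_k S_k^{-p}$ by a multiple of a telescoping difference $S_{k-1}^{-p+1}-S_k^{-p+1}$ (up to sign, depending on whether $p>1$ gives $-p+1<0$). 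Since $S_k\ge S_{k-1}$ and $g(x)=x^{-(p-1)}$ is decreasing, $g(S_{k-1})-g(S_k)\ge 0$; by the mean value theorem this difference equals $(p-1)\xi^{-p}\alpha_k$ for some $\xi\in(S_{k-1},S_k)$, hence is at least $(p-1)S_k^{-p}\alpha_k$. Summing over $k=r,\ldots,N$ telescopes to $S_{r-1}^{-(p-1)}-S_N^{-(p-1)}\le S_{r-1}^{-(p-1)}$, which is \emph{not} quite $S_r^{-(p-1)}=(\sum_{k=1}^r\alpha_k)^{-(p-1)}$ — it has $S_{r-1}$ in place of $S_r$. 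This discrepancy, involving the first term of the range, is the source of the constant $2$ and the $1/\min\{1,p-1\}$ factor, and handling it cleanly is the main obstacle.

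To deal with the off-by-one issue I would split the sum: treat the single term $k=r$ separately, bounding $\alpha_r S_r^{-p}\le S_r^{-(p-1)}$ trivially (since $\alpha_r\le S_r$), and apply the telescoping estimate only to $k=r+1,\ldots,N$, which gives $\sum_{k=r+1}^N\alpha_k S_k^{-p}\le \frac{1}{p-1}S_r^{-(p-1)}$. Adding the two contributions yields $\sum_{k=r}^N\alpha_k S_k^{-p}\le(1+\frac{1}{p-1})S_r^{-(p-1)}$, and $1+\frac{1}{p-1}=\frac{p}{p-1}\le\frac{2}{\min\{1,p-1\}}$: when $p\ge 2$ the left side is at most $2\le\frac{2}{\min\{1,p-1\}}=2$, and when $1<p<2$ it is at most $\frac{p}{p-1}\le\frac{2}{p-1}=\frac{2}{\min\{1,p-1\}}$. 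This gives exactly the stated bound. The only subtlety is that the mean-value-theorem step requires $S_{k-1}>0$, i.e.\ $k\ge 2$, which is fine on the range $k\ge r+1\ge 2$; so the separate treatment of $k=r$ is essential both for the constant and for legitimacy of the telescoping.

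An equivalent and perhaps marginally cleaner route avoids the mean value theorem and instead uses the elementary inequality $a^{-(p-1)}-b^{-(p-1)}\ge (p-1)b^{-p}(b-a)$ for $0<a\le b$ (just convexity of $x^{-(p-1)}$, or the same one-line estimate $b^{-(p-1)}-a^{-(p-1)}=-\int_a^b(p-1)\xi^{-p}\,d\xi\le 0$ read with the lower bound $\xi^{-p}\ge b^{-p}$), applied with $a=S_{k-1}$, $b=S_k$. I expect the write-up to be about half a page. The one place to be careful is bookkeeping of the range of summation and the empty-sum convention $S_0=0$, so that the $k=r$ term is genuinely handled by the crude bound and the remaining terms by telescoping; no deep input beyond monotonicity and convexity of a single power function is needed.
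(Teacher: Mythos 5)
Your proof is correct and follows essentially the same route as the paper: the same convexity-based telescoping bound $(p-1)\,\alpha_k \bigl(\sum_{\ell=1}^k \alpha_\ell\bigr)^{-p} \leq \bigl(\sum_{\ell=1}^{k-1}\alpha_\ell\bigr)^{-p+1} - \bigl(\sum_{\ell=1}^{k}\alpha_\ell\bigr)^{-p+1}$ summed over $k=r+1,\ldots,N$, combined with a trivial bound on the single term $k=r$. The only cosmetic difference is in how the two contributions are merged into the stated constant (you get $p/(p-1)$ and check it is at most $2/\min\{1,p-1\}$; the paper weights the $k=r$ term by $\min\{1,p-1\}$ before adding), which changes nothing essential.
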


\begin{proof} We use the inequality
	$$ (p-1) \cdot b (a+b)^{-p} \leq a^{-p+1} - (a+b)^{-p+1}, $$
	 which is valid for any $a, b  > 0$.  Then for  $k=2,\ldots,N$,
	$$ (p-1) \cdot \alpha_k  \left( \sum_{\ell=1}^k
	\alpha_{\ell} \right)^{-p}  \leq \left( \sum_{\ell=1}^{k-1} \alpha_\ell \right)^{-p+1}  - \left( \sum_{\ell=1}^{k} \alpha_\ell \right)^{-p+1}. $$
 By summing this  for $k=r+1,\ldots, N$ we obtain
	$$
	(p-1) \cdot \sum_{k=r+1}^{N} \alpha_k \left( \sum_{\ell=1}^k
	\alpha_{\ell} \right)^{-p} \leq  \left( \sum_{k=1}^{r} \alpha_k
	\right)^{-p+1}  - \left( \sum_{\ell=1}^{N} \alpha_\ell \right)^{-p+1}
\leq \left( \sum_{k=1}^{r} \alpha_k
	\right)^{-p+1},
	$$
where an empty sum equals zero.
We conclude (\ref{eq_1505_}) by summing this with the trivial inequality
$$  \min \{1, p-1 \} \cdot \alpha_r 	\left( \sum_{\ell=1}^r
\alpha_{\ell} \right)^{-p} \leq \left( \sum_{k=1}^{r} \alpha_k \right)^{-p+1}. $$
\end{proof}

\begin{proof}[Proof of Proposition \ref{prop_1622}] Define
	$$ U(k) = w_k^{1/p} \cdot Q_k \qquad \text{and} \qquad V(k) = w_k^{1/p}. $$
	Let us verify the condition of the Muckenhoupt criterion. We need to find $A > 0$ such that 	
	for all $r =1,\ldots,N$ inequality (\ref{eq_1325}) holds true, that is,
\begin{equation}   \sum_{k=r}^N w_k Q_k^p \leq A^p \left( \sum_{k=1}^{r} w_k^{-q/p}  \right)^{-p/q}. \label{eq_1342} \end{equation}
Recall that $p/q = p-1$. From the definition (\ref{eq_1306}) of $Q_k$, we need
$$  \sum_{k=r}^N w_k^{-1/(p-1)} \left( \sum_{\ell=1}^k w_\ell^{-1/(p-1)} \right)^{-p} \leq A^p \left( \sum_{k=1}^{r} w_k^{-1/(p-1)}  \right)^{-(p-1)}. $$
Setting $\alpha_k = w_k^{-1/(p-1)}$ and using Lemma \ref{lem_1609_}, we see that
(\ref{eq_1342}) holds true with $$ A = 2^{1/p} \cdot \max \{ 1, (p-1)^{-1/p} \}. $$
From Theorem \ref{thm_1459} we thus conclude that for any $f: \Omega_N \rightarrow \RR$,
$$ \left( \sum_{k=1}^{N} w_k \left| Q_k \sum_{\ell=1}^k f(\ell) \right|^p  \right)^{1/p} \leq p^{1/p} q^{1/q} A \cdot \left( \sum_{k=1}^{N} w_k |f(k)|^p \right)^{1/p}. $$
This implies the required bound for the operator norm of $f$.
\end{proof}

\begin{proposition} For $f: \Omega_n \rightarrow \RR$ and $s=1,\ldots,N$ denote
	$$ \cT_1 f(s) =  \sum_{t=s+1}^N \left[ Q_s \prod_{k=s+1}^t (1-Q_k) \right] f(t), $$
where $Q_s$ is as in (\ref{eq_1306}) above.
Then the operator norm of $\cT_1$ with respect to the norm $\| \cdot \|_{p, w}$ defined in (\ref{eq_1307}) is bounded
	by $2^{1/q} p^{1/p} q^{1/q} \cdot  \max \{1, (q-1)^{-1/q} \}$. \label{prop_1746}	
\end{proposition}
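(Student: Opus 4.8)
The plan is to first use the explicit choice (\ref{eq_1306}) of the $Q_k$ to collapse the kernel of $\cT_1$. Writing $\sigma_s = \sum_{\ell=1}^{s} w_\ell^{-1/(p-1)}$ (with $\sigma_0 = 0$), formula (\ref{eq_1306}) reads $Q_s = w_s^{-1/(p-1)}/\sigma_s$, so that $1 - Q_s = \sigma_{s-1}/\sigma_s$ and the product telescopes: $\prod_{k=s+1}^{t} (1-Q_k) = \sigma_s/\sigma_t$. Hence $Q_s \prod_{k=s+1}^t (1-Q_k) = w_s^{-1/(p-1)}/\sigma_t$, and
\begin{equation*}
\cT_1 f(s) = w_s^{-1/(p-1)} \sum_{t=s+1}^{N} \frac{f(t)}{\sigma_t}.
\end{equation*}
Thus $\cT_1$ is an upper-triangular Hardy-type operator, and the natural tool is the forward Muckenhoupt criterion, Corollary \ref{cor_1459_}.

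The only wrinkle is that the sum runs over $t \ge s+1$ rather than over $t \ge s$. Since the kernel of $\cT_1$ is nonnegative, $\|\cT_1\|_{p,w} = \sup_{0 \not\equiv f \ge 0} \|\cT_1 f\|_{p,w}/\|f\|_{p,w}$ (because $|\cT_1 f| \le \cT_1 |f|$ pointwise and $\|\cdot\|_{p,w}$ is a lattice norm, in the spirit of Lemma \ref{lem_1744}); and for $f \ge 0$ one has the pointwise bound $\cT_1 f(s) \le \cT_1' f(s) := w_s^{-1/(p-1)} \sum_{t=s}^{N} f(t)/\sigma_t$. So it suffices to bound $\|\cT_1'\|_{p,w}$, and $\cT_1'$ has exactly the shape of Corollary \ref{cor_1459_}: putting $g(t) = f(t)/\sigma_t$, $U(s) = w_s^{-1/(p(p-1))}$ and $V(t) = w_t^{1/p}\sigma_t$, one checks $w_s|\cT_1' f(s)|^p = \bigl|U(s)\sum_{t\ge s} g(t)\bigr|^p$ and $|V(t)g(t)|^p = w_t|f(t)|^p$. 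Hence Corollary \ref{cor_1459_} reduces the proposition to verifying its hypothesis (\ref{eq_1325_}) for this pair $U, V$.

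Verifying (\ref{eq_1325_}) is where Lemma \ref{lem_1609_} enters. Using $q/p = q-1 = 1/(p-1)$ and $w_k^{-1/(p-1)} = \sigma_k - \sigma_{k-1}$, the left-hand side of (\ref{eq_1325_}) becomes $\sigma_r^{1/p}$ while the right-hand side becomes $A \cdot \bigl(\sum_{k=r}^{N}(\sigma_k - \sigma_{k-1})\sigma_k^{-q}\bigr)^{-1/q}$, so (\ref{eq_1325_}) is equivalent to
\begin{equation*}
\sum_{k=r}^{N} (\sigma_k - \sigma_{k-1})\,\sigma_k^{-q} \le A^q\, \sigma_r^{\,1-q} \qquad (r = 1,\dots,N).
\end{equation*}
This is precisely Lemma \ref{lem_1609_} with $\alpha_k = \sigma_k - \sigma_{k-1}$ (so that $\sum_{\ell=1}^{k}\alpha_\ell = \sigma_k$) and with the exponent $q \in (1,\infty)$ playing the role of $p$; it gives $A^q = 2/\min\{1, q-1\}$, that is, $A = 2^{1/q}\max\{1, (q-1)^{-1/q}\}$. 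Feeding this $A$ together with $C_p = p^{1/p}q^{1/q}$ into Corollary \ref{cor_1459_} yields $\|\cT_1 f\|_{p,w} \le \|\cT_1' f\|_{p,w} \le C_p A\, \|f\|_{p,w}$, which is exactly the asserted bound $2^{1/q}p^{1/p}q^{1/q}\max\{1,(q-1)^{-1/q}\}$.

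The step I expect to require the most care is the reduction to nonnegative $f$ together with the domination $\cT_1 \le \cT_1'$: if one instead replaced $\cT_1$ directly by its ``$t\ge s$'' majorant for all $f$, the difference would be a diagonal operator of norm $\le \sup_s Q_s \le 1$ and one would only obtain the weaker bound $C_p A + 1$, so exploiting nonnegativity is what produces the clean constant. Apart from that, the argument parallels the treatment of $\cT_0$ in Proposition \ref{prop_1622}, with the roles of $p$ and $q$ interchanged and with Lemma \ref{lem_1609_} (rather than Lemma \ref{lem_1539}) supplying the Muckenhoupt condition.
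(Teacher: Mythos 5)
Your proposal is correct and follows essentially the same route as the paper: telescoping the product via (\ref{eq_1306}) to rewrite $\cT_1 f(s) = \alpha_s \sum_{t>s} f(t)/\sigma_t$ with $\alpha_k = w_k^{-1/(p-1)}$, verifying the hypothesis of Corollary \ref{cor_1459_} with the same $U, V$ and $A = 2^{1/q}\max\{1,(q-1)^{-1/q}\}$ via Lemma \ref{lem_1609_} applied with exponent $q$, and using nonnegativity of the kernel to handle the $t \ge s+1$ versus $t \ge s$ discrepancy. The only cosmetic difference is that you majorize $\cT_1$ by the $t\ge s$ operator before applying the corollary, whereas the paper applies the corollary first and then drops the diagonal term for nonnegative functions; the constant obtained is identical.
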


\begin{proof} Denote $\alpha_k = w_k^{-1/(p-1)}$ and recall from (\ref{eq_1306}) that
$Q_k = \alpha_k / \sum_{\ell=1}^k \alpha_{\ell}$. We claim that for any $t \geq s+1$,
\begin{equation}
 Q_s \prod_{k=s+1}^t (1-Q_k) = \frac{\alpha_s}{\sum_{k=1}^t \alpha_k}.
 \label{eq_1811}
\end{equation}
 Indeed, (\ref{eq_1811}) holds true for
$t=s$, since  an empty product equals one, and for $t \geq s+1$ it is proven by an easy induction on $t$. Consequently,
$$ \cT_1 f(s) = \alpha_s \sum_{t=s+1}^N \frac{1}{\sum_{j=1}^t \alpha_j} f(t). $$
Since $p,q \in (1, \infty)$, the elementary inequality of Lemma \ref{lem_1609_}
is valid also when $p$ is replaced by $q$. It implies that for $r=1,\ldots, N$,
\begin{equation}
\left( \sum_{k=1}^r \alpha_k \right)^{-q+1} \geq \frac{\min \{1, q-1 \}}{2} \cdot \sum_{k=r}^N \alpha_k \left(  \sum_{j=1}^k \alpha_j
\right)^{-q}.
\label{eq_1156}
\end{equation}
Set $A = 2^{1/q} \min \{1, q-1 \}^{-1/q}$. Since $\alpha_k = w_k^{-1/(p-1)}$ and $q/p = q-1 = 1/(p-1)$, it follows from (\ref{eq_1156}) that
for $r=1,\ldots,N$,
$$ \left( \sum_{k=1}^r w_k \alpha_k^p \right)^{1/p} \leq A \left( \sum_{k=r}^N w_k^{-q/p} \left(  \sum_{j=1}^k \alpha_j
\right)^{-q} \right)^{-1/q}. $$
This is precisely the Muckenhoupt criterion from Corollary
\ref{cor_1459_}, with
$$ U(k) = w_k^{1/p} \alpha_k \qquad \text{and} \qquad V(k) =
w_k^{1/p} \sum_{j=1}^k \alpha_j. $$
Thus, by Corollary \ref{cor_1459_}, for any  $g: \Omega_N \rightarrow \RR$,
\begin{equation}
\sum_{k=1}^N w_k
\left|\alpha_k \sum_{\ell=k}^N g(\ell) \right|^p
\leq (C_p A)^p \sum_{k=1}^N w_k
\left| \left( \sum_{j=1}^k \alpha_j \right) g(k) \right|^p,
\label{eq_1203} \end{equation}
with $C_p = p^{1/p} q^{1/q}$. By restricting attention to non-negative functions $g$ in
(\ref{eq_1203}), we may alter (\ref{eq_1203}) and replace $\sum_{\ell=k}^N$ by the shorter sum $\sum_{\ell=k+1}^N$. Inequality (\ref{eq_1203}) remains correct, for non-negative $g$,
also after this modification.
Denoting $g(k) = f(k) / \sum_{j=1}^k \alpha_j$, we conclude that for any non-negative
function $f: \Omega_N \rightarrow \RR$,
\begin{equation}
\sum_{k=1}^N w_k
\left|\alpha_k \sum_{\ell=k+1}^N \frac{1}{\sum_{j=1}^\ell \alpha_j} f(\ell) \right|^p
\leq (C_p A)^p \sum_{k=1}^N w_k
\left| f(k) \right|^p.
\label{eq_1207} \end{equation}
Since the kernel of $\cT_1$ is non-negative, its operator norm is attained
at a non-negative function $f: \Omega_N \rightarrow \RR$.  Therefore
(\ref{eq_1207}) implies the required bound for the  operator norm of $\cT_1$.
\end{proof}

\begin{proof}[Proof of Proposition \ref{prop_1152}]
The kernel of the operator $\cT$ is given in (\ref{eq_1209}). It is a non-negative
kernel, and therefore the operator norm of $\cT$ is at most the operator norm of the operator
whose kernel is the expression on the right-hand side of (\ref{eq_1209}). The latter operator equals
$$ \cT_0 + \cT_1 $$
with $\cT_0$ from Proposition \ref{prop_1622} and $\cT_1$ from Proposition \ref{prop_1746}.
From these two propositions it follows that the operator norm of $\cT$ is at most
\begin{align*} 2^{1/p} p^{1/p} q^{1/q} \cdot & \max \{ 1, (p-1)^{-1/p} \} + 2^{1/q} p^{1/p} q^{1/q} \cdot  \max \{1, (q-1)^{-1/q} \}. \end{align*}
\end{proof}

Theorem \ref{thm_1226} follows from Lemma \ref{lem_350} and Proposition \ref{prop_1152}.

\bigskip
\newpage \noindent {\it Remarks.}\nopagebreak
\begin{enumerate}
\item In this paper we have left open several natural questions, including
the existence of linear extension operators for $\dot{W}^{1,p}(T)$ for
weighted trees $T$ in the extreme cases $p=1, p=\infty$, as well as the
analog of our result for the inhomogeneous Sobolev space $W^{1,p}(T)$ in
place of $\dot{W}^{1,p}(T)$.
\item The problem of existence of linear Sobolev extension operators for
weighted trees arose in connection with an extension problem for
$W^{2,p}(\RR^2)$. More precisely, given $E \subseteq \RR^2$, let $\dot{W}^{2,p}(E)$
denote the space of restrictions to $E$ of functions in $\dot{W}^{2,p}(\RR^2)$,
endowed with the natural seminorm. Does there exist a linear extension
operator from $\dot{W}^{2,p}(E)$  to $\dot{W}^{2,p}(\RR^2)$? The answer is
affirmative for $p > 2$; see A. Israel \cite{I}. For $1<p<2$,
the answer is unknown. For a particular class of examples $E$, the
problem reduces to the question answered by Theorem
\ref{thm_1226_}.
\end{enumerate}


\begin{thebibliography}{99}
	

\bibitem{BBGS} Bj\"orn, A., Bj\"orn, J., Gill, J. T., Shanmugalingam, N.,
{\it Geometric analysis on Cantor sets and trees.} J. Reine Angew. Math., Vol. 725, (2017), 63--114.

\bibitem{HS} Howard, R., Schep, A. R.,
{\it Norms of positive operators on $L^p$-spaces.}
Proc. Amer. Math. Soc., Vol. 109, no. 1, (1990), 135--146.

\bibitem{I} Israel, A., {\it  A bounded linear extension operator for $L^{2,p}(\RR^2)$.} Ann. of Math. (2), Vol. 178, no. 1, (2013), 183--230.


\bibitem{M} Muckenhoupt, B., {\it Hardy's inequality with weights}. Studia Math., Vol. 44, (1972), 31--38.

\bibitem{Shvartsman} Shvartsman, P., {\it Sobolev $W^1_p$-spaces on closed subsets of $\RR^n$}.
Adv. Math., Vol. 220, no. 6, (2009), 1842--1922.

\end{thebibliography}
\end{document}